\newtheorem{thm}{Theorem}[section]
\newtheorem{lem}[thm]{Lemma}
\newtheorem{prop}[thm]{Proposition}
\theoremstyle{definition}
\theoremstyle{definition}
\newtheorem{construction}[thm]{Construction}
\theoremstyle{definition}
\newtheorem{remark}[thm]{Remark}
\theoremstyle{remark}
\newtheorem*{rem}{Remark}
\begin{document}
\def\X#1#2{r(v^{#2}\ds{\prod_{i \in #1}}{x_{i}})}
\def\skp#1{\vskip#1cm\relax}
\def\block{\rule{2.4mm}{2.4mm}}

\def\cred{\color{red}}
\def\cblue{\color{blue}}
\def\cblack{\color{black}}

\def\block{\rule{2.4mm}{2.4mm}}
%\definecolor{pale}{rgb}{0.975,1,1}
%\pagecolor{pale}

\def\nd{\noindent}
\def\becomes{\colon\hspace{-2,5mm}=}
\def\ds{\displaystyle}
\def\red{\color{red}}
\def\black{\color{black}}
\def\s{\sigma}
\numberwithin{equation}{section}

\title[A generalization of the Davis-Januszkiewicz construction \ldots]{A generalization of the 
Davis-Januszkiewicz construction and applications to toric manifolds and iterated polyhedral products}

\skp{0.2}

\author[A.~Bahri]{A.~Bahri}
\address{Department of Mathematics,
Rider University, Lawrenceville, NJ 08648, U.S.A.}
\email{bahri@rider.edu}

\author[M.~Bendersky]{M.~Bendersky}
\address{Department of Mathematics
CUNY,  East 695 Park Avenue New York, NY 10065, U.S.A.}
\email{mbenders@hunter.cuny.edu}

\author[F.~R.~Cohen]{F.~R.~Cohen}
\address{Department of Mathematics,
University of Rochester, Rochester, NY 14625, U.S.A.}
\email{cohf@math.rochester.edu}

\author[S.~Gitler]{S.~Gitler}
%\address{Department of Mathematics,
%Cinvestav, San Pedro Zacatenco, Mexico, D.F. CP 07360 Apartado
%Postal 14-740, Mexico} \email{sgitler@math.cinvestav.mx}
\address{El Colegio Nacional, Gonzalez Obregon 24\,C, Centro Historico, Mexico City, Mexico.}  \email{sgitler@math.cinvestav.mx}

\subjclass[2000]{Primary: 13F55, 14M25, 52B11, 53D05,  55U10  \/ 
\newline Secondary: 14F45,15A36}

\keywords{Davis-Januszkiewicz construction, $J$ construction, quasi-toric manifold, toric manifold, quasitoric manifold, smooth toric variety, non-singular toric variety, moment-angle complex, polyhedral product,  simplicial wedge,}

\

\begin{abstract}
The fundamental Davis-Januszkiewicz construction of toric manifolds is reinterpreted in order to allow for generalization. Applications involve the simplicial wedge $J$-construction and Ayzenberg's recent
identities arising from composed simplicial complexes. \end{abstract}

\maketitle
\tableofcontents
\section{Introduction}\label{sec:introduction}
The topological approach to non-singular toric varieties requires two ingredients:
\begin{enumerate}
\item a simple polytope $P^{n}$  of dimension $n$ having 
a set $\mathcal{F}$ of $m$ facets and
\item a {\em characteristic\/} function $\lambda \colon \mathcal{F} \longrightarrow \mathbb{Z}^n$
which assigns an integer vector to each facet of the simple polytope $P^n$. 
\end{enumerate}

\nd The latter can be considered as an $(n \times m)$-matrix 
$\lambda \colon \mathbb{Z}^m \longrightarrow \mathbb{Z}^n$
\nd with integer entries and columns indexed by the facets of  $P^n$.
A regularity condition, which ensures the smoothness of the toric manifold, requires 
all $n \times n$ minors of $\lambda$ corresponding to the vertices of $P^n$ to be $+1$ or $-1$.

Associated to the pair $(P^{n},\lambda)$,  Davis and Januszkiewicz \cite{davis.jan}, constructed two spaces:
$$\mathcal{L} = T^m \times P^n \big/\!\!\sim$$

\nd and a toric manifold
$$M^{2n} = T^n \times P^n \big/\!\!\sim_{\!\lambda}.$$

\nd The properties of the spaces $\mathcal{L}$ have been studied extensively via
an alternative general construction developed by Buchstaber and Panov \cite{buchstaber.panov.2}, 
who gave them the name  ``moment-angle complexes''
$$\mathcal{L} \;=\; T^{m}\times P^{n}\big/\!\!\sim \quad\cong\quad Z(K_{P}; (D^{2},S^{1})).$$

\nd In the notation used here, $K_{P}$ represents the simplicial complex dual to the boundary of a simple
polytope $P^{n}$. From this point of view, the toric manifold $M^{2n}$ is recovered as the quotient
$Z(K_{P}; (D^{2},S^{1}))\big/\rm{ker}\hspace{0.02in}\lambda$.

The main results presented in authors' earlier work \cite{bbcg3}, arise from a construction on a simplicial 
complex $K_{P}$ having $m$ vertices. For each sequence
$J = (j_1, j_2,\ldots,j_m)$ of positive integers, a new simplicial complex $K_{P}(J)$ is constructed,
$$K_{P} \rightsquigarrow K_{P}(J).$$

\nd Also, associated to $P^{n}$ is another simple polytope $P(J)$ and $K_{P}(J) \;=\; K_{P(J)}$.
Everything fits together in such a way that, from the toric manifold $(P^{n}, \lambda, M^{2n})$, it is possible to construct  another toric manifold $\big(P(J),\lambda(J), M(J)\big)$.  In the context of moment-angle complexes
and polyhedral products (\cite{bbcg}), it is shown in \cite{bbcg3} that there is a diffeomorphism of orbit
spaces
\begin{equation*}\label{eqn:orbit.homeo}
Z(K_{P};(\underline{D}^{2J}, \underline{S}^{2J-1}))\big/\text{{\rm ker}}\;\!\lambda
\; \longrightarrow \; Z\big(K_{P}(J); (D^2, S^1))\big)\big/\text{{\rm ker}}\;\lambda(J)
\end{equation*}

\nd which defines $M(J)$. Here, the left hand side uses the notation of a polyhedral product,  
({\em generalized\/} moment-angle complex), associated to the family of pairs
$$(\underline{D}^{2J}, \underline{S}^{2J-1}) \;=\; \big\{(D^{2j_{i}}, S^{2j_{i}})\big\}_{i=1}^{m}.$$

\nd These ideas are elaborated upon in Section \ref{sec:families}. The right hand side involves
an ordinary moment-angle complex and fits in with the formalism of Davis--Januszkiewicz and
Buchstaber--Panov. A natural question arises about the left hand side which involves the combinatorics
of the smaller simplicial complex $K_{P}$: where is this visible in the standard Davis--Januszkiewicz
construction of toric manifolds? One of the primary goals here is to answer this question.

\

The details of the Davis--Januszkiewicz construction are reviewed in Section \ref{sec:djreview}
and the modifications necessary to generalize the construction are described in Section 
\ref{sec:modify.relations}. The modified construction is interpreted in terms of polyhedral products in
Section \ref{sec:bp.formalism} and the answer to the question posed above is presented in
Section  \ref{sec:families}. Additional applications involving the ``composed complex'' constructions
of A.~Ayzenberg \cite{aa}, are discussed in Sections \ref{sec:iterated.pp} and \ref{sec:further.pp}.

\

\nd{\bf Acknowledgments.} The first author was supported in part by a Rider University Summer Research Fellowship and grant number 210386 from the Simons Foundation; the third author was supported partially by 
DARPA grant number 2006-06918-01.

\

\section{A review of the Davis-Januszkiewicz construction}\label{sec:djreview}

A toric manifold $M^{2n}$ is a manifold covered by local 
charts $\mathbb{C}^n$, each with the standard  action of a real $n$-dimensional torus
$T^n$, compatible in such a way that 
the quotient $M^{2n}\big/T^n$ has the structure of a  {\em simple\/} polytope $P^{n}$. Here, ``simple'' 
means that  $P^n$  has the property that at each vertex, exactly $n$ facets  intersect.
Under the $T^n$ action, each copy of $\mathbb{C}^n$ must project to an $\mathbb{R}^n_+$  
neighborhood of a vertex of $P^n$. The fundamental  construction of Davis and Januszkiewicz 
\cite[Section $1.5$]{davis.jan} is described briefly below. It realizes all toric manifolds and, in 
particular, all smooth projective toric varieties. Let
$${\mathcal F} = \{F_{1},F_{2},\ldots,F_{m}\}$$

\nd denote the set of facets of $P^n$. The fact that $P^n$ $\;$ is 
simple implies that every codimension-$l$ face $F$ can be 
written uniquely as
$$F = F_{i_{1}} \cap F_{i_{2}} \cap \cdots \cap F_{i_{l}}$$

\nd where the $F_{i_{j}}$ are the facets containing $F$. Let
\begin{equation}\label{eqn:lambda}
\lambda : {\mathcal F} \longrightarrow \mathbb{Z}^n
\end{equation}

\nd be a function into an $n$-dimensional integer lattice satisfying the {\em regularity\/}
condition that whenever 
$F = F_{i_{1}} \cap F_{i_{2}} \cap \cdots \cap F_{i_{l}}$ then 
$\{\lambda(F_{i_{1}}),\lambda(F_{i_{2}}), \ldots ,\lambda(F_{i_{l}})\}$ span 
an $l$-dimensional submodule of $\mathbb{Z}^n$ which is a direct summand.
Such a map is called a {\em characteristic function\/} associated to $P^n$.
Next, regarding $\mathbb{R}^n$ as the Lie algebra of $T^n$,  the map
$\lambda$ is used to associate  to each codimension-$l$ face $F$ of $P^n$ \/ a rank-$l$
subgroup $G_F \subset T^n$. Specifically, writing
$$\lambda(F_{i_j}) = (\lambda_{1{i_j}},\lambda_{2{i_j}},\ldots,\lambda_{n{i_j}})$$

\nd gives
$$ G_F = \big\{\big(e^{2\pi{i}(\lambda_{1{i_1}}t_1 +\lambda_{1{i_2}}t_2 + \cdots + \lambda_{1{i_l}}t_l)},
\ldots, e^{2\pi{i}(\lambda_{n{i_1}}t_1 + \lambda_{n{i_2}}t_2 + \cdots +\lambda_{n{i_l}}t_l)}\big) \in T^n\big\}$$
\skp{0.19}
\nd where $t_i \in \mathbb{R},\, i = 1,2,\ldots,l$. Finally, let $p \in$ $P^n$ $\;$ and $F(p)$ be 
the unique face with $p$ in its relative interior. Define an equivalence
relation $\sim_{\!\lambda}$ on $T^n$ $\times$ $P^n$ $\;$ by $(g,p) \sim_{\!\lambda} (h,q)$ if and only
if $p = q$ and $g^{-1}h \in G_{F(p)} \cong T^l$. Then
\begin{equation}\label{eqn:defn.tm}
M^{2n} \cong M^{2n}(\lambda) = T^n \times P^n\big/\!\sim_{\!\lambda}
\end{equation} 

\nd is a smooth, closed, connected, $2n$-dimensional manifold with 
$T^n$ action induced by left translation \cite[page 423]{davis.jan}. A 
projection $\pi \colon M^{2n} \rightarrow P^n$ onto the polytope is induced from the projection
$T^n \times$ $P^n$ $\rightarrow$ $P^n$. 
\begin{rem}
In the cases when $M^{2n}$ is a projective non-singular toric
variety, $P^n$ \;and $\lambda$ encode topologically the information in the defining fan, 
\cite[Chapter 5]{buchstaber.panov.2}.
\end{rem}

Let $K_{P}$ denote the simplicial complex dual to the boundary of simple polytope 
$P^n$ having $m$ facets. Recall
that the duality here is in the sense that the facets of $P^n$ correspond to the vertices of 
$K_{P}$.  A set of vertices in $K_{P}$ is a simplex if and only if the corresponding facets in
$P^n$ all intersect.  
Davis and Januszkiewicz constructed a second space in \cite{davis.jan}, which came to be known as a 
{\em moment-angle manifold\/}, by
\begin{equation}\label{eqn:mac}
\mathcal{Z} \; \simeq\; T^m \times P^n \big/\!\!\sim
\end{equation}

\nd where here $\sim$ does not involve the characteristic $\lambda$ but the combinatorics
of the simplicial complex $K_{P}$ only. Here also, the circles in $T^{m}$ are indexed by the facets of $P^{n}$.
The equivalence relation $\sim$ is defined by analogy with that of \eqref{eqn:defn.tm}.
Specifically, $\lambda$ in \eqref{eqn:lambda} is replaced by 
\begin{equation}\label{eqn:theta}
\theta\colon {\mathcal F} \longrightarrow \mathbb{Z}^m\end{equation}

\nd where $\theta(F_{i}) = \underline{e}_{i} \in \mathbb{Z}^{m}$. 

\

Constructions \eqref{eqn:mac} and \eqref{eqn:defn.tm} are related by a quotient map given by the free action of
$\text{ker}\hspace{0.02in}\lambda$ on $\mathcal{Z}$
\begin{equation}\label{eqn:bp.proj}
T^m \times P^n \big/\!\!\sim \;\longrightarrow\; (T^m \times P^n \big/\!\!\sim)\big/\text{ker}\hspace{0.02in}\lambda
\;\cong \;T^n \times P^n\big/\!\sim_{\!\lambda}
\end{equation}

\nd as described in \cite[Section $6.1$]{buchstaber.panov.2}.

\section{Modifying the equivalence relations}\label{sec:modify.relations}
As above, let $P^{n}$ be simple polytope. The construction of \eqref{eqn:mac} is generalized easily
by first replacing each of the circles in $T^{m}$ by spaces $X_1, X_2, \ldots,X_m$, indexed by the facets of $P^{n}$.

\

\begin{construction}\label{def:tilde1}
Define an equivalence relation $\sim_{1}$ on the Cartesian product 
$$X_1 \times X_2 \times \cdots \times X_m \times P^{n}\;$$

\nd as follows:
$$(x_{1},x_{2},\ldots,x_{m},p)\; \sim_{1}\; (y_{1},y_{2},\ldots,y_{m},q)$$

\nd if and only if: 
\begin{enumerate}[(a)]\itemsep3pt
\item $p =q$ and
\item when $p$ is in the relative interior of the face $F(p) = F_{j_{1}}\cap F_{j_{2}}\cap \cdots F_{j_{k}}$
given as the intersection of the $k$ facets which are complementary to 
$\{F_{i_{1}}, F_{i_{2}},\ldots, F_{i_{m-k}}\}$, then $x_{i_{s}} =  y_{i_{s}}$ for all $s \in \{1,2,\ldots,m-k\}$.
\end{enumerate}

\nd Equivalence classes of points in\; $(X_{1} \times X_{2} \times \cdots \times X_{m}) \times P^{n}\big/\!\!\sim_{1}$
\;are denoted by the symbol $\big[(x_{1},x_{2},\ldots,x_{m},p)\big]_{1}$.
\end{construction}

%\begin{rem} Notice that this construction finesses the question of a basepoint for the space
%$(X_{1} \times X_{2} \times \cdots \times X_{m}) \times P^{n}\big/\!\!\sim_{1}$.
%\end{rem}

\

Suppose now that  $S^1$ acts {\em freely\/} on the spaces  $X_1, X_2, \ldots,X_m$, giving 
an action of $T^m$ on
$X_1 \times X_2 \times \cdots \times X_m$ in the obvious way. Recall  that the function $\theta$ of
\eqref{eqn:theta} indexes  the ``coordinate'' circles
in $T^{m}$  by the facets of $P^{n}$. Also, each space
$X_{i}$ is associated with the facet $F_{i}$, So, an intersection of $k$ facets
in $P^n$ determines a projection $T^{m} \longrightarrow T^{m-k}$ and, by this projection,   
$T^m$ acts on the product $X_{i_{1}} \times X_{i_{2}} \times \cdots \times X_{i_{m-k}}$.

\

Next, let $\lambda$ be a characteristic map  specified for the polytope $P^{n}$. Then
$$\text{ker}\hspace{0.02in}\lambda \;\cong\; T^{m-n}\; \subset\; T^m.$$

\nd For $k \leq n$, there is the induced action of $\text{ker}\hspace{0.02in}\lambda \subset\; T^m$  on the product
$$X_{i_{1}} \times X_{i_{2}} \times \cdots \times X_{i_{m-k}}$$

\nd  and a projection
\begin{equation}\label{eqn:quotients}
\pi_{i_{1},i_{2},\ldots, i_{m-k}}\colon 
 \; X_1 \times X_2 \times \cdots \times X_m\big/\rm{ker}\hspace{0.02in}\lambda
 \; \longrightarrow 
X_{i_{1}} \times X_{i_{2}} \times \cdots \times X_{i_{m-k}}\big/\rm{ker}\hspace{0.02in}\lambda
\end{equation}

\nd corresponding to each intersection of $k$ facets. 

\

\nd Equivalence classes of points in 
$X_1 \times X_2 \times \cdots \times X_m\big/\rm{ker}\hspace{0.02in}\lambda$ are denoted by the symbol
$[x_{1},x_{2},\ldots,x_{m}]_{\lambda}$. The next Construction generalizes that of
\eqref{eqn:defn.tm}.

\begin{construction}\label{def:tilde2}
Define an equivalence relation $\sim_{2}$ on the Cartesian product 
$$\big(X_1 \times X_2 \times \cdots \times X_m\big/\rm{ker}\hspace{0.02in}\lambda\big) \times P^n$$

\nd as follows:
$$\big([x_{1},x_{2},\ldots,x_{m}]_{\lambda},p\big) \sim_{2} \big([y_{1},y_{2},\ldots,y_{m}]_{\lambda},q\big)$$

\nd if and only if: 
\begin{enumerate}[(i)]\itemsep3pt
\item $p =q$
\item when $p$ is in the relative interior of the face $F(p) = F_{j_{1}}\cap F_{j_{2}}\cap \cdots F_{j_{k}}$
given as the intersection of the $k$ facets which are complementary to 
$\{F_{i_{1}}, F_{i_{2}},\ldots, F_{i_{m-k}}\}$, then
$$\pi_{i_{1},i_{2},\ldots, i_{m-k}}([x_{1},x_{2},\ldots,x_{m}]_{\lambda}) = 
\pi_{i_{1},i_{2},\ldots, i_{m-k}}([y_{1},y_{2},\ldots,y_{m}]_{\lambda}).$$
\end{enumerate}
\end{construction}

\nd Equivalence classes of points in\; 
$\big(X_1 \times X_2 \times \cdots \times X_m\big/{\rm{ker}}\hspace{0.02in}\lambda\big)\times P^n \big/\!\!\sim_{2}$
\;are denoted by the symbol $\big[([x_{1},x_{2},\ldots,x_{m}]_{\lambda},p)\big]_{2}$.

\

Construction \ref{def:tilde2} can be reinterpreted as follows.
The group $\text{ker}\hspace{0.02in}\lambda$ acts on the space $(X_{1} \times X_{2} \times \cdots \times X_{m}) 
\times P^{n}\big/\!\!\sim_{1}$\; by
$$t\cdot [(x_{1},x_{2}, \ldots,x_{m}, p)]_{1} \;=\; [t\cdot (x_{1},x_{2}, \ldots,x_{m}), p)]_{1}.$$

\nd Property (b) in Construction \ref{def:tilde1} ensures that the action is well defined.  The next lemma, the 
analogue of \eqref{eqn:bp.proj}, follows naturally.

\

\begin{lem}\label{lem:h}
There is a homeomorphism
$$h\colon\big(X_1 \times X_2 \times \cdots \times X_m\big/{\rm{ker}}\hspace{0.02in}\lambda\big) 
\times P^n \big/\!\!\sim_{2} \quad\longrightarrow\quad
\big((X_{1} \times X_{2} \times \cdots \times X_{m}) \times P^{n}\big/\!\!\sim_{1}\big)
\big/{\rm{ker}}\hspace{0.02in}\lambda$$

\nd given by 
$$h\big(\big[([x_{1},x_{2},\ldots,x_{m}]_{\lambda},p)\big]_{2}\big) 
\;=\; \big[[(x_{1},x_{2},\ldots,x_{m},p)]_{1}\big]_{\lambda}.$$
\end{lem}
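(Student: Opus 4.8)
The plan is to show that the map $h$ is well defined, bijective, continuous, and open, exploiting the fact that both sides are quotients of the same product $X_1 \times \cdots \times X_m \times P^n$, with the two group actions ($\text{ker}\,\lambda$) and the two simplicial identifications ($\sim_1$ versus the combinatorial part of $\sim_2$) commuting with each other. First I would set up the commuting square: the left-hand side is obtained from $X_1 \times \cdots \times X_m \times P^n$ by first quotienting the first $m$ factors by $\text{ker}\,\lambda$ and then imposing $\sim_2$, while the right-hand side is obtained by first imposing $\sim_1$ and then quotienting by the induced $\text{ker}\,\lambda$-action described just before the lemma. So it suffices to check that the two composite equivalence relations on $X_1 \times \cdots \times X_m \times P^n$ coincide; then $h$ is the canonical identification of the two iterated quotients and everything else is formal.

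The key computation is therefore the following. Take two points $(x_1,\ldots,x_m,p)$ and $(y_1,\ldots,y_m,q)$. Going around the left-hand side, they become identified iff $p=q$ and, writing $F(p)$ as the intersection of the facets complementary to $\{F_{i_1},\ldots,F_{i_{m-k}}\}$, we have $\pi_{i_1,\ldots,i_{m-k}}([x_*]_\lambda) = \pi_{i_1,\ldots,i_{m-k}}([y_*]_\lambda)$, i.e.\ there exists $t \in \text{ker}\,\lambda$ with $t\cdot(x_{i_1},\ldots,x_{i_{m-k}}) = (y_{i_1},\ldots,y_{i_{m-k}})$ in $X_{i_1}\times\cdots\times X_{i_{m-k}}$, where $t$ acts through the projection $T^m \to T^{m-k}$. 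Going around the right-hand side, they become identified iff there exists $t \in \text{ker}\,\lambda$ with $t\cdot(x_1,\ldots,x_m,p) \sim_1 (y_1,\ldots,y_m,q)$, i.e.\ $p=q$ and $t$ acting on the full tuple $(x_1,\ldots,x_m)$ agrees with $(y_1,\ldots,y_m)$ in exactly the coordinates $i_1,\ldots,i_{m-k}$ (by property (b) of Construction \ref{def:tilde1}). These two conditions are visibly the same: projecting the $T^m$-action to the relevant coordinates and then comparing is the same as comparing after the $T^m$-action in those coordinates, precisely because $\pi_{i_1,\ldots,i_{m-k}}$ was defined in \eqref{eqn:quotients} to be compatible with the projection $T^m \to T^{m-k}$. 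I would spell this out carefully, noting that the free $S^1$-actions on the $X_i$ make the $\text{ker}\,\lambda$-action on each partial product well defined and the notation $[x_{i_1},\ldots,x_{i_{m-k}}]_\lambda$ meaningful.

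Once the two relations are seen to agree, $h$ is induced on the common quotient and is automatically a continuous bijection, with continuous inverse, by the universal property of quotient maps (both composite quotient maps are open, being quotients by group actions composed with the identification maps, which are open here since $\sim_1$ and the $\text{ker}\,\lambda$-orbit relations are open equivalence relations on the respective spaces). The formula $h\big(\big[([x_1,\ldots,x_m]_\lambda,p)\big]_2\big) = \big[[(x_1,\ldots,x_m,p)]_1\big]_\lambda$ is exactly the identification of representatives, so nothing more needs checking there.

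**Main obstacle.** The only genuinely delicate point is bookkeeping: one must be scrupulous about which coordinates the projected torus acts on and in what order the operations ``act by $t$'' and ``forget the complementary coordinates'' are performed, since $\sim_1$ constrains the $x_{i_s}$ for $s \in \{1,\ldots,m-k\}$ while the face $F(p)$ is named by the complementary $k$ facets $F_{j_1},\ldots,F_{j_k}$. Getting the index conventions of Constructions \ref{def:tilde1} and \ref{def:tilde2} to line up with the projection \eqref{eqn:quotients} is where an error would creep in; the topological content (open maps, universal property) is routine.
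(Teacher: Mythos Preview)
Your proposal is correct and follows essentially the same route as the paper. The paper verifies well-definedness and injectivity of $h$ by exactly the computation you describe (finding $t\in\ker\lambda$ with $t\cdot(x_{i_1},\ldots,x_{i_{m-k}})=(y_{i_1},\ldots,y_{i_{m-k}})$ and matching this against condition~(b) of Construction~\ref{def:tilde1}), and then simply asserts ``It is easy now to see that $h$ is a homeomorphism''; your framing of this as showing that the two composite equivalence relations on $X_1\times\cdots\times X_m\times P^n$ coincide is a cleaner packaging of the same verification, and your explicit appeal to openness of the quotient maps fills in what the paper leaves implicit.
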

\skp{0.2}
\begin{proof}
To see that $h$ is well defined, suppose
$$\big[([x_{1},x_{2},\ldots,x_{m}]_{\lambda},p)\big]_{2} \;=\; \big[([y_{1},y_{2},\ldots,y_{m}]_{\lambda},p)\big]_{2}$$

\nd with $p$ is in the relative interior of the face $F(p) = F_{j_{1}}\cap F_{j_{2}}\cap \cdots F_{j_{k}}$
given as the intersection of the $k$ facets which are complementary to 
$\{F_{i_{1}}, F_{i_{2}},\ldots, F_{i_{m-k}}\}$,
as in Construction \ref{def:tilde2}. Then, $\pi_{i_{1},i_{2},\ldots, i_{m-k}}([x_{1},x_{2},\ldots,x_{m}]) = 
\pi_{i_{1},i_{2},\ldots, i_{m-k}}([y_{1},y_{2},\ldots,y_{m}])$ and hence,
$$t\cdot (x_{i_{1}},x_{i_{2}},\ldots,x_{i_{m-k}}) \;=\; (y_{i_{1}},y_{i_{2}},\ldots,y_{i_{m-k}})$$

\nd for some $t \in \rm{ker}\hspace{0.02in}\lambda$. It follows now from Construction \ref{def:tilde1} that
$$\big[([x_{1},x_{2},\ldots,x_{m}]_{\lambda},p)\big]_{2}\;=\; \big[([y_{1},y_{2},\ldots,y_{m}]_{\lambda},p)\big]_{2}$$

\nd as required. 
\skp{0.2}
To check that $h$ is an injection, suppose that
$$h\big(\big[([x_{1},x_{2},\ldots,x_{m}]_{\lambda},p)\big]_{2}\big) 
\;=\; h\big(\big[([y_{1},y_{2},\ldots,y_{m}]_{\lambda},p)\big]_{2}\big).$$

\nd Then $t \in \rm{ker}\hspace{0.02in}\lambda$ exists so that
$$t\cdot [(x_{1},x_{2}, \ldots,x_{m}, p)]_{1}  \;=\; [t\cdot (x_{1},x_{2}, \ldots,x_{m}), p)]_{1} \;=\;  [(y_{1},y_{2}, \ldots,y_{m}, p)]_{1}.$$ 

\nd Next, write $t\cdot (x_{1},x_{2}, \ldots,x_{m}) = (u_{1},u_{2}, \ldots,u_{m})$. 
Since  $p$ is in the relative interior of the face $F(p) = F_{j_{1}}\cap F_{j_{2}}\cap \cdots F_{j_{k}}$, it follows that
$$u_{i_{s}} =  y_{i_{s}}\quad\text{for all}\quad s \in \{1,2,\ldots,m-k\}$$

\nd corresponding to the complementary facets. It follows that
$$\pi_{i_{1},i_{2},\ldots, i_{m-k}}([x_{1},x_{2},\ldots,x_{m}]_{\lambda}) = 
\pi_{i_{1},i_{2},\ldots, i_{m-k}}([y_{1},y_{2},\ldots,y_{m}]_{\lambda})$$

\nd and hence 
$$\big[([x_{1},x_{2},\ldots,x_{m}]_{\lambda},p)\big]_{2} \;=\; \big[([y_{1},y_{2},\ldots,y_{m}]_{\lambda},p)\big]_{2}.$$

\nd It is easy now to see that $h$ is a homeomorphism.\end{proof}

\section{Interpreting the generalizations in the Buchstaber--Panov formalism}\label{sec:bp.formalism}
Construction \eqref{eqn:mac} can be analyzed locally. In the neighbourhood of a vertex $v_{i}$, a simple polytope 
$P^{n}$ looks like $\mathbb{R}^{n}_{+}$.

\nd \hspace{1.7in}\includegraphics[width=2in]{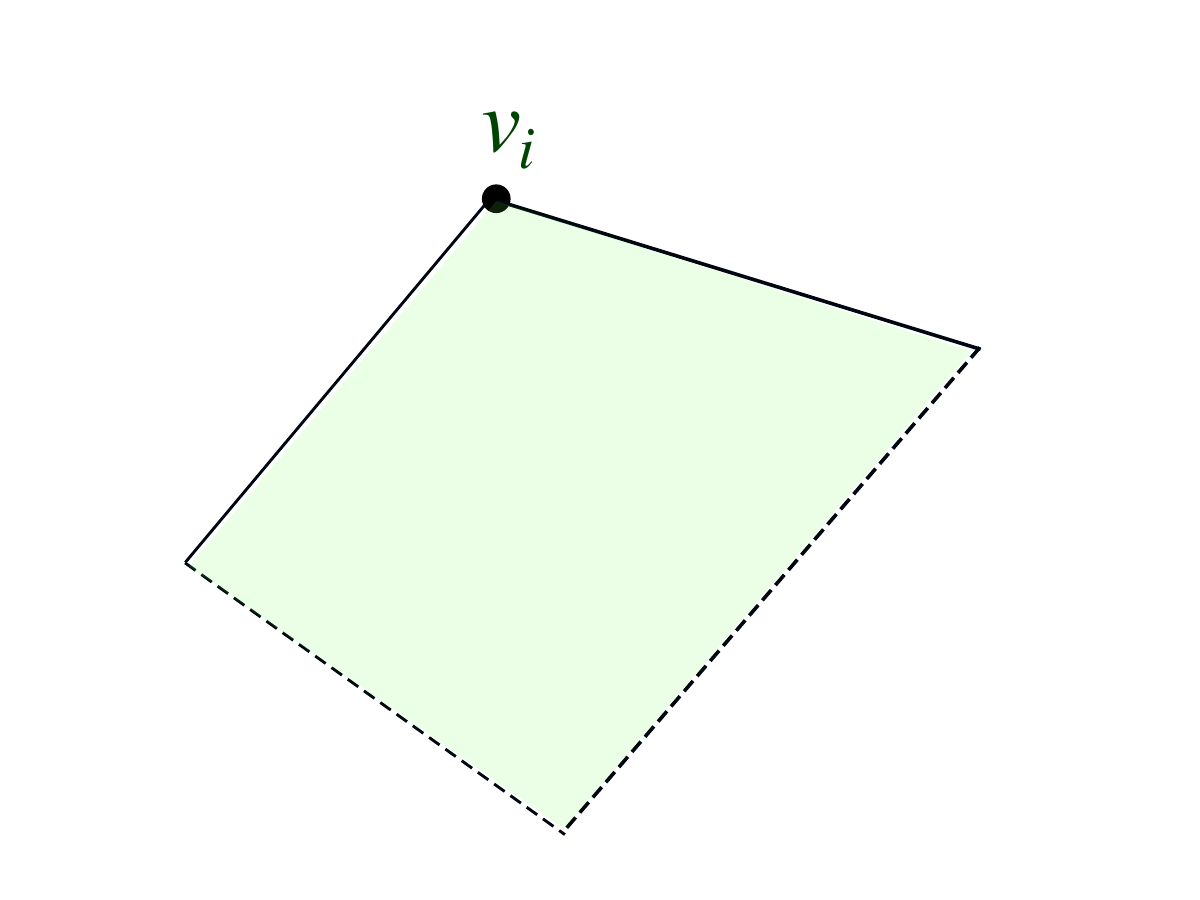}
\skp{-1.5}\hspace{3.2in}{\large $\mathbb{R}^{n}_{+}$}

\skp{1.3}

\nd The polytope can be given a {\em cubical\/} structure as in 
\cite[Construction $5.8$ and Lemma $6.6$]{buchstaber.panov.2}. The cube $I^{n}$, {\em anchored\/} by the vertex $v_{i}$, sits
inside the copy of $\mathbb{R}^{n}_{+}$ obtained by deleting all faces of $P^{n}$ which do not contain $v_{i}$.

\

Locally, $T^{m}\times P^{n}$ is
\begin{equation}\label{eqn:local}
T^{m}\times I^{n} \;\cong\; (S^{1} \times I)^{n} \times (S^{1})^{m-n}
\end{equation}

\nd Recall that all the circles in $T^{m}$ are indexed by the facets of the polytope so here, {\em the order
of the factors is confused\/}. The factors $S^{1}$ which are paired  with a copy of $I$ are those 
corresponding to the facets of $P^{n}$ which meet at $v_{i}$. The effect of the equivalence relation
$\;\sim\;$ in $T^{m}\times I^{n}\big/\!\!\sim$,\; \eqref{eqn:mac},
is to convert every $S^{1} \times I$ on the right hand side of \eqref{eqn:local} into a 
disc by collapsing $S^{1}\times \{0\}$ to a point. So
\begin{equation}\label{eqn:blocks}
T^{m}\times I\big/\!\!\sim\quad\cong\quad (D^{2})^{n} \times (S^{1})^{m-n}.
\end{equation}

The vertices of $P^{n}$ correspond  the maximal simplices of the simplicial complex $K_{P}$ so, assembling the blocks
\eqref{eqn:blocks} gives the moment-angle manifold
$$\mathcal{Z} \;=\; T^{m}\times P^{n}\big/\!\!\sim \quad\cong\quad Z(K_{P}; (D^{2},S^{1})).$$

\nd As described in \cite{davis.jan} and \cite{buchstaber.panov.2}, the map $\lambda$ determines a subtorus  
$\text{ker}\hspace{0.02in}\lambda =  T^{m-n} \subset T^{m}$ and a 
commutative diagram of quotient maps

\begin{equation}\label{eqn:djdiagram}
\begin{CD}
T^{m}\times P^{n}\big/\!\!\sim  @>{}>> T^n \times P^n \big/\!\!\sim_{\!\lambda}\\
@VV{\cong}V           @VV{\cong}V \\
 Z(K_{P}; (D^{2},S^{1})) @>{}>> Z(K_{P}; (D^{2},S^{1}))\big/\text{ker}\hspace{0.02in}\lambda. 
\end{CD}
\end{equation}

\

The construction above is generalized easily by replacing each of the circles in $T^{m}$ by
spaces $X_1, X_2, \ldots,X_m$ indexed by the facets of $P^{n}$.
Again, locally in the neighbourhood of a vertex $v_{i}$,  at which facets 
$F_{i_{1}}, F_{i_{2}},\ldots,F_{i_{n}}$ meet,
$X_1 \times X_2 \times \cdots \times X_m \times P^{n}\;$ is
\begin{align*}
X_1 \times X_2 \times &\cdots \times X_m \times I^{n}\\
&= (X_{i_{1}} \times X_{i_{2}} \times \cdots \times X_{i_{n}} \times I^{n})
\;\times\;\; X_{i_{n+1}} \times X_{i_{n+2}} \times \cdots \times X_{i_{m}}\\
&= (X_{i_{1}} \times I)\times (X_{i_{2}} \times I)\times 
\cdots \times (X_{i_{n}} \times I)
\;\times\;  X_{i_{n+1}}  \times X_{i_{n+2}} \times \cdots \times X_{i_{m}}
\end{align*}

\

\nd  As everything in the Cartesian product is indexed by the facets of the polytope, 
the order of the factors here has been shuffled naturally. Finally,  the equivalence relation  $\sim_{1}$ on
$$(X_{1} \times X_{2} \times \cdots \times X_{m}) \times I^{n}$$

\nd converts every $X_{i_{k}}\times I\;$ into $CX_{i_{k}}$.
 in a natural way. So,
\begin{align*}
(X_{1} \times X_{2} \times &\cdots \times X_{m}) 
\times I^{n}\big/\!\!\sim_{1}\\
&\cong\quad   CX_{i_{1}} \times CX_{i_{2}} \times 
\cdots \times CX_{i_{n}} \times (X_{i_{n+1}}  \times X_{i_{n+2}} \times \cdots \times X_{i_{m}} )
\end{align*}

\nd Assembling over all the vertices of $P^{n}$ along the common intersection determined by the cubical structure on $P^{n}$,
\nd gives the polyhedral product\:
$$ Z\Big(K_{P}; \big(\underline{CX_{i}}, \;
\underline{X_{i}}\big)\Big) \quad \subseteq \quad CX_{1} \times CX_{2} \times \cdots \times CX_{m}.$$

\nd just as in the case $X_{i} = S^{1}$ for standard moment-angle complexes, \cite[Section $6.1$]{buchstaber.panov.2}.

\

A choice of cubical structure for the simple polytope $P^{n}$ allows a choice of  homeomorphism
\begin{equation}\label{eqn:alpha}
\alpha\colon X_1 \times X_2 \times \cdots \times X_m
\times P^n \big/\!\!\sim_{1} \quad\longrightarrow\quad
Z\big(K_{P}; \big(\underline{CX}, \;\underline{X}\big)\big).
\end{equation}

\

The fact that $S^1$ acts (freely) on $X_i$ is now used again. $X_{i}$. 
The action extends to $CX_{i}$ by preserving the cone parameter. 
Consequently, there is an action of $T^{m}$ on
$CX_{1}\times CX_{2} \times \cdots\times CX_{m}$ which extends to
$$Z\Big(K_{P}; \big(\underline{CX}, \;\underline{X}\big)\Big) \;\subseteq\; 
 CX_{1} \times CX_{2} \times \cdots \times CX_{m}.$$

\nd In exactly the same way as it does for the case of $\text{ker}\hspace{0.02in}\lambda$ acting on 
$Z(K_{P}; (D^{2},S^{1}))$, the regularity condition on the characteristic map $\lambda$, (Section \ref{sec:djreview}), 
ensures that
$\text{ker}\hspace{0.02in}\lambda \subset T^{m}$ acts freely on 
$Z\Big(K_{P}; \big(\underline{CX}, \;\underline{X}\big)\Big)$. 

\begin{rem}
With respect to this action of $\text{ker}\hspace{0.02in}\lambda$, the homeomorphism $\alpha$ above, 
is equivariant and induces a homeomorphism of orbit spaces
$$\overline{\alpha}\colon \big((X_{1} \times X_{2} \times \cdots \times X_{m}) \times P^{n}\big/\!\!\sim_{1}\big)
\big/{\rm{ker}}\hspace{0.02in}\lambda \;\longrightarrow\;
Z\big(K_{P}; (\underline{CX}, \;\underline{X})\big)\big/\rm{ker}\hspace{0.02in}\lambda.$$
\end{rem}

The next theorem is the analogue of \eqref{eqn:djdiagram}.

\begin{thm}\label{thm:diagram}
The diagram following commutes

\begin{equation}\label{eqn:new.diagram}
\begin{CD}
X_1 \times X_2 \times \cdots \times X_m \times P^{n}\big/\!\!\sim_{1}  @>{\beta}>> 
\big(X_1 \times X_2 \times \cdots \times X_m\big/{\rm{ker}}\hspace{0.02in}\lambda\big) 
\times P^n \big/\!\!\sim_{2}\\
@V\alpha V{\cong}V           @V\gamma V{\cong}V \\
Z\big(K_{P}; (\underline{CX}, \;\underline{X})\big) @>{\delta}>> 
Z\big(K_{P}; (\underline{CX}, \;\underline{X})\big)\big/\rm{ker}\hspace{0.02in}\lambda 
\end{CD}
\end{equation}

\
  
\nd where the map $\beta$ is the composite of the quotient map
$$\big((X_{1} \times X_{2} \times \cdots \times X_{m}) \times P^{n}\big/\!\!\sim_{1}\big)
\;\longrightarrow 
\big((X_{1} \times X_{2} \times \cdots \times X_{m}) \times P^{n}\big/\!\!\sim_{1}\big)
\big/{\rm{ker}}\hspace{0.02in}\lambda$$

\nd with the map $h^{-1}$ of Lemma \ref{lem:h}, and $\gamma$ is the composite $\overline{\alpha}\circ h^{-1}$.
\end{thm}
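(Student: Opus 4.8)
The plan is to observe that, once the constituent maps are unwound, commutativity of \eqref{eqn:new.diagram} is a purely formal consequence of two facts already available: that $h$ in Lemma \ref{lem:h} is a homeomorphism, and that $\overline{\alpha}$ is \emph{by construction} the map of orbit spaces induced by the $\text{ker}\,\lambda$-equivariant homeomorphism $\alpha$ of \eqref{eqn:alpha}, as recorded in the Remark preceding the theorem. I would then add a chase on representatives to make the identification of $\gamma$ and the bookkeeping transparent.

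First I would fix notation for the quotient maps. Let $q_{1}\colon (X_{1}\times\cdots\times X_{m})\times P^{n}/\!\!\sim_{1}\;\longrightarrow\;\big((X_{1}\times\cdots\times X_{m})\times P^{n}/\!\!\sim_{1}\big)/\text{ker}\,\lambda$ denote the quotient by the $\text{ker}\,\lambda$-action of Construction \ref{def:tilde2}, and let $\delta$ be the quotient of $Z\big(K_{P};(\underline{CX},\underline{X})\big)$ by $\text{ker}\,\lambda$. By definition $\beta = h^{-1}\circ q_{1}$, while $\gamma = \overline{\alpha}\circ h$, this being the composite of the stated ingredients having the correct source $\big(X_{1}\times\cdots\times X_{m}/\text{ker}\,\lambda\big)\times P^{n}/\!\!\sim_{2}$ and target $Z\big(K_{P};(\underline{CX},\underline{X})\big)/\text{ker}\,\lambda$. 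Hence
$$\gamma\circ\beta \;=\; \overline{\alpha}\circ h\circ h^{-1}\circ q_{1} \;=\; \overline{\alpha}\circ q_{1},$$
using only $h\circ h^{-1}=\mathrm{id}$. On the other hand, the assertion that $\alpha$ is $\text{ker}\,\lambda$-equivariant and induces $\overline{\alpha}$ on orbit spaces means precisely that $\delta\circ\alpha = \overline{\alpha}\circ q_{1}$. Combining the two displays gives $\gamma\circ\beta = \delta\circ\alpha$, which is the asserted commutativity.

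Finally, to confirm the identification of $\gamma$ and to exhibit the equality on points, I would trace a class $[(x_{1},\ldots,x_{m},p)]_{1}$. Across the top and then down: $q_{1}$ sends it to $\big[[(x_{1},\ldots,x_{m},p)]_{1}\big]_{\lambda}$, and $h^{-1}$ sends that, by Lemma \ref{lem:h}, to $\big[([x_{1},\ldots,x_{m}]_{\lambda},p)\big]_{2}$; applying $h$ returns $\big[[(x_{1},\ldots,x_{m},p)]_{1}\big]_{\lambda}$, whose image under $\overline{\alpha}$ is the $\text{ker}\,\lambda$-orbit of $\alpha\big([(x_{1},\ldots,x_{m},p)]_{1}\big)$. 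Down the left and then across: $\alpha$ sends $[(x_{1},\ldots,x_{m},p)]_{1}$ to $\alpha\big([(x_{1},\ldots,x_{m},p)]_{1}\big)$, and $\delta$ passes to its $\text{ker}\,\lambda$-orbit; the two outputs agree. I do not expect a genuine obstacle here: the content is formal, and the only point requiring care is keeping the four quotients straight — the relations $\sim_{1}$ and $\sim_{2}$, the cubical identification $\alpha$, and the $\text{ker}\,\lambda$-action — and invoking the equivariance of $\alpha$ from the preceding Remark rather than reproving it.
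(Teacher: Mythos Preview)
Your argument is correct and is essentially the paper's own proof, only written out more explicitly: the paper simply says to use $h$ to replace the top-right corner by $\big((X_{1}\times\cdots\times X_{m})\times P^{n}/\!\!\sim_{1}\big)/\text{ker}\,\lambda$ and then invoke the $\text{ker}\,\lambda$-equivariance of $\alpha$, which is exactly your reduction $\gamma\circ\beta=\overline{\alpha}\circ q_{1}=\delta\circ\alpha$. You also quietly fix what is a typo in the statement --- the composite with the correct source and target is $\gamma=\overline{\alpha}\circ h$, not $\overline{\alpha}\circ h^{-1}$ --- and your domain/codomain check makes that clear.
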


\begin{proof} 
The homeomorphism $h$ can be used to replace the space in the top right hand corner with the  space
$\big((X_{1} \times X_{2} \times \cdots \times X_{m}) \times P^{n}\big/\!\!\sim_{1}\big)
\big/{\rm{ker}}\hspace{0.02in}\lambda$. This gives a new commutative diagram by the equivariance of the homeomorphism
$\alpha$. The maps $\beta$ and $\delta$ are defined in terms of the map $h^{-1}$ and so the diagram commutes as given.
\end{proof}

The next remark confirms that the original Davis-Januszkiewicz constructions are preserved.

\

\begin{remark}
For the case $X_{i} = S^{1}$ and $S^{1}$ acting on itself in the usual way, Constructions \ref{def:tilde1} and \ref{def:tilde2}
agree with those of \eqref{eqn:mac} and \eqref{eqn:defn.tm}.
\end{remark}

\section{Application to the construction of infinite families of toric manifolds}\label{sec:families}
\subsection{The case of odd spheres}\label{subsec:oddspheres} 
The first application  is to the infinite families of toric manifolds constructed in \cite{bbcg3} and summarized briefly below.
\skp{0.3}
Let $K$ be a simplicial complex of dimension $n-1$ on vertices $\{v_1,v_2,\ldots,v_m\}$. Given a sequence of positive
integers $J = (j_1, j_2,\ldots,j_m)$, define a new simplicial complex $K(J)$  on new vertices 
$$\big\{{v_{11},\ldots,v_{1j_1}}, {v_{21},\ldots,v_{2j_2}},
\ldots,{v_{m1},\ldots,v_{mj_m}}\big\},$$

\nd with the property that 
$$\big\{{v_{i_{1}1},\ldots,v_{i_{1}j_{i_{1}}}},\;\;\ldots\;\;,{v_{i_{k}1},\ldots,v_{i_{k}j_{i_{k}}}}\big\}$$

\nd is a  minimal non-face of $K(J)$ if and only if  $\{v_{i_1},\ldots,v_{i_k}\}$ 
is a minimal non-face of $K$.  Moreover, all minimal non-faces of $K(J)$ have this form.  A result of 
Provan and Billera, \cite[page 578]{pb}, ensures
that If $K = K_{P}$ is dual to the boundary of a simple polytope $P$, then $K(J)$ is dual to the boundary of
another simple polytope $P(J)$. That is
\begin{equation}\label{eqn:kp}
K_{P}(J) \;=\; K_{P(J)}.
\end{equation}

\nd It is the case also that the polytope $P(J)$ can be constructed directly from P in a straightforward way.

\skp{0.3}

Let $(P^{n}, \lambda, M^{2n})$ specify a toric manifold as in \eqref{eqn:defn.tm}. 
\nd From this, it is possible to construct another toric manifold $\big(P(J),\lambda(J), M(J)\big)$  where the numbers
$m$ and $n$, from Section \ref{sec:djreview}, transform as follows.

\begin{equation}\label{eqn:transform1}
\left[\begin{array}{c}
m\\
n\\
m-n
\end{array}\right]
\rightsquigarrow
\left[\begin{array}{c}
d(J) = j_{1}+j_{2}+\cdots+j_{m}\\
d(J) -m+n\\
m-n
\end{array}\right]
\end{equation}

\

\nd In terms of the original characteristic map $\lambda$, the matrix specifying the characteristic map 
$\lambda(J)$ 
is given in Figure 1 below. In that figure, $I_{j_{i}-1}$ represents the the identity matrix of size $j_{i}-1$. 
It is clear from the form of the matrix $\lambda(J)$ and the definition of $K(J)$, that the 
Davis-Januszkiewicz cohomology calculation, \cite[Theorem $4.14$]{davis.jan}, expresses the integral cohomology of $M(J)$ in terms of the sequence $J$, the original matrix $\lambda$ and the combinatorics of $K$. 

\newpage
\setlength{\unitlength}{8.5mm}
\newcounter{qn}
\hoffset=0.09in
\setlength{\oddsidemargin}{12pt}
\begin{picture}(10,50)

%column 1
\put(0.2,48.3){$\bm{v_{12}}$}
\put(1,48.3){$\cdots$}
\put(2,48.3){$\bm{v_{1j_1}}$}

\put(0,44){\framebox(3,4)[c]{\huge{$I_{j_{1}-1}$}}}
\put(0,40){\framebox(3,4)[c]{\huge{$0$}}}
\put(0,36){\framebox(3,4)[c]
{\begin{picture}(4,4)
\put(2,1){$\centerdot$}
\put(2,2){$\centerdot$}
\put(2,3){$\centerdot$}
\end{picture}}}
\put(0,32){\framebox(3,4)[c]{\huge{$0$}}}
\put(0,28){\framebox(3,4)[c]{\huge{$0$}}}

%column 2
\put(3.2,48.3){$\bm{v_{22}}$}
\put(4,48.3){$\cdots$}
\put(5,48.3){$\bm{v_{2j_2}}$}

\put(3,44){\framebox(3,4)[c]{\huge{$0$}}}
\put(3,40){\framebox(3,4)[c]{\huge{$I_{j_{2}-1}$}}}
\put(3,36){\framebox(3,4)[c]{\huge{$0$}}}
\put(3,32){\framebox(3,4)[c]
{\begin{picture}(4,4)
\put(2,1){$\centerdot$}
\put(2,2){$\centerdot$}
\put(2,3){$\centerdot$}
\end{picture}}}
\put(3,28){\framebox(3,4)[c]{\huge{$0$}}}

%column 3
\put(7.15,48.3){$\cdots$}

\put(6,44){\framebox(3,4)[c]{$\centerdot\;\centerdot\;\centerdot$}}
\put(6,40){\framebox(3,4)[c]{\huge{$0$}}}
\put(6,36){\framebox(3,4)[c]
{\begin{picture}(4,4)
\put(2.5,1){$\centerdot$}
\put(1.85,2){$\centerdot$}
\put(1.2,3){$\centerdot$}
\end{picture}}}
\put(6,32){\framebox(3,4)[c]{\huge{$0$}}}
\put(6,28){\framebox(3,4)[c]{\huge{$0$}}}

%column 4
\put(9.2,48.3){$\bm{v_{m2}}$}
\put(10.1,48.3){$\cdots$}
\put(10.8,48.3){$\bm{v_{mj_m}}$}

\put(9,44){\framebox(3,4)[c]{\huge{$0$}}}
%\put(9,44){\framebox(3,4)[c]{$\centerdot\;\centerdot\;\centerdot$}}
\put(9,40){\framebox(3,4)[c]{\huge{$0$}}}
\put(9,36){\framebox(3,4)[c]{\huge{$0$}}}
\put(9,32){\framebox(3,4)[c]{\huge{$I_{j_{m}-1}$}}}
\put(9,28){\framebox(3,4)[c]{\huge{$0$}}}

%column 5
\put(12.3,48.3){$\bm{v_{11}}$}
\put(13.2,48.3){$\bm{v_{21}}$}
\put(14.2,48.3){$\cdots$}
\put(15.1,48.3){$\bm{v_{m1}}$}
\put(12,44){\framebox(4,4)[l]{\begin{picture}(4,4)
\put(0.1,3.5){$-1$}
\put(0.1,3){$-1$}
\put(0.1,1.3){\begin{picture}(1,1)
\put(0.36,1){$\centerdot$}
\put(0.36,0.5){$\centerdot$}
\put(0.36,0){$\centerdot$}
\end{picture}}
\put(0.1,0.2){$-1$}
\end{picture}}}
\put(14,45.65){{\huge{$0$}}}

%column 5 row 2
\put(12,40){\framebox(4,4)[l]{\begin{picture}(4,4)
\put(0.2,3.5){$0$}
\put(0.2,3){$0$}
\put(0.2,1.3){\begin{picture}(1,1)
\put(0.2,1){$\centerdot$}
\put(0.2,0.5){$\centerdot$}
\put(0.2,0){$\centerdot$}
\end{picture}}
\put(0.2,0.2){$0$}
\put(0.8,3.5){$-1$}
\put(0.8,3){$-1$}
\put(0.5,1.3){\begin{picture}(1,1)
\put(0.7,1){$\centerdot$}
\put(0.7,0.5){$\centerdot$}
\put(0.7,0){$\centerdot$}
\end{picture}}
\put(0.8,0.2){$-1$}
\end{picture}}}
\put(14,41.65){{\huge{$0$}}}

%column 5 row 3
\put(12,36){\framebox(4,4)[c]
{\begin{picture}(4,4)
\put(2,1){$\centerdot$}
\put(2,2){$\centerdot$}
\put(2,3){$\centerdot$}
\end{picture}}}

%column 5 row 4
\put(12,32){\framebox(4,4)[l]{\begin{picture}(4,4)
\put(1.9,1.7){{\huge{$0$}}}
\put(3.2,3.5){$-1$}
\put(3.2,3){$-1$}
\put(3,1.3){\begin{picture}(1,1)
\put(0.57,1){$\centerdot$}
\put(0.57,0.5){$\centerdot$}
\put(0.57,0){$\centerdot$}
\end{picture}}
\put(3.2,0.2){$-1$}
\end{picture}}}

%column 5 row 5
\put(12,28){\framebox(4,4)[c]{\huge{$\lambda$}}}
\put(12.3,27.5){$\bm{1}$}
\put(13,27.5){$\bm{2}$}
\put(14,27.5){$\bm{\cdots}$}
\put(15.4,27.5){$\bm{m}$}

\put(16.2,31.5){$\bm{1}$}
\put(16.2,30.9){$\bm{2}$}
\put(15.7,29.25){\begin{picture}(1,1)
\put(0.57,1){$\centerdot$}
\put(0.57,0.5){$\centerdot$}
\put(0.57,0){$\centerdot$}
\end{picture}}
\put(16.2,28.15){$\bm{n}$}

\put(4.6,25.8){\large{{\bf Figure 1.}} The matrix $\lambda(J)$}
\end{picture}

\setlength{\vfuzz}{2mm} 
\setlength{\textwidth}{160mm}
\setlength{\textheight}{205mm} 
\setlength{\oddsidemargin}{0pt}
\setlength{\evensidemargin}{0pt}
\hoffset=0.0in

%\newpage
%\skp{0.5}
%\hspace{-0.5in}\includegraphics[width=7in]{lambdajay.jpg}

Also in \cite{bbcg3} is an interpretation of this construction of the toric manifolds $M(J)$
in terms of generalized moment-angle complexes. To see this, consider the family of CW pairs
$$(\underline{D}^{2J}, \underline{S}^{2J-1}) \;=\; \big\{(D^{2j_{i}}, S^{2j_{i}})\big\}_{i=1}^{m}$$

\nd and the associated generalized moment-angle complex $Z(K_{P};(\underline{D}^{2J}, \underline{S}^{2J-1}))$.
There is an inclusion of tori $T^{m}\longrightarrow T^{d(J)}$ which includes the $i^{\rm{th}}$ circle in $T^{m}$ by the diagonal
\begin{equation}\label{eqn:diagonal}
S^{1} \longrightarrow (S^{1})^{j_{i}}.
\end{equation}

\nd This gives and action of $T^{m}$ on $Z\big(K_{P}(J); (D^2, S^1))\big)$.
Also, via a choice of diffeomorphism $D^{2j_{i}} \cong (D^{2})^{j_{I}}$, there is an
action of $T^{d(j)}$ on the moment-angle complex $Z(K_{P};(\underline{D}^{2J}, \underline{S}^{2J-1}))$.
With this understood, there are $T^{m}$ and $T^{d(J)}$-equivariant diffeomorphisms
\begin{equation}\label{eqn:exponent}
Z(K_{P};(\underline{D}^{2J}, \underline{S}^{2J-1}))\; \longrightarrow\; Z\big(K_{P}(J); (D^2, S^1)\big)
\end{equation}

\nd from which arises a diffeomorphism of orbit spaces
\begin{equation}\label{eqn:orbit.homeo}
Z(K_{P};(\underline{D}^{2J}, \underline{S}^{2J-1}))\big/\text{{\rm ker}}\;\!\lambda
\; \longrightarrow \; Z\big(K_{P}(J); (D^2, S^1))\big)\big/\text{{\rm ker}}\;\lambda(J)
\end{equation}

\nd which defines $M(J)$. (Here, $\text{{\rm ker}}\;\!\lambda$ and $\text{{\rm ker}}\;\lambda(J)$ are 
isomorphic subgroups of $T^{d(J)}$.) The appearance of the toric manifold $M(J)$ as the right hand side
of \eqref{eqn:orbit.homeo} is perplexing because that space is not reflected in either the fundamental construction
\eqref{eqn:defn.tm} or in diagram \eqref{eqn:djdiagram}. The matter is resolved by the diagram of 
Theorem \ref{thm:diagram} where the space appears in the bottom right  of the diagram with
$X_{i} = S^{2j_{i}-1}$, and $CX_{i} = D^{2j_{i}}$; the group $S^{1}$ acts freely on $S^{2j_{i}-1}$ in the usual way.
this observation is formalized in the next theorem.

\begin{thm}\label{thm:mj} The toric manifolds $M(J)$, defined by either 
the original Davis-Januszkiewicz construction \eqref{eqn:defn.tm} or equivalently, by the quotients 
\eqref{eqn:orbit.homeo}, are examples of Construction \ref{def:tilde2} as follows
$$M(J)\;=\;\big(T^{d(J)-m+n} \times P(J)\big)\big/\hspace{-2mm}\sim_{\lambda(J)} \;\cong \;
\big(S^{2j_{1}-1} \times S^{2j_{2}-1} \times \cdots \times S^{2j_{m}-1}\big/{\rm{ker}}\hspace{0.02in}\lambda\big) 
\times P^n \big/\!\!\sim_{2}.$$
\end{thm}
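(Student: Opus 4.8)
The plan is to deduce Theorem \ref{thm:mj} from Theorem \ref{thm:diagram} and Lemma \ref{lem:h} by making the substitution $X_i = S^{2j_i-1}$, $CX_i = D^{2j_i}$, with the standard free $S^1$-action on each odd sphere, and then identifying the resulting bottom-right corner of diagram \eqref{eqn:new.diagram} with the space $M(J)$ as presented in \eqref{eqn:orbit.homeo}. First I would record that with this choice of pairs, the polyhedral product appearing in Theorem \ref{thm:diagram} is exactly $Z\big(K_P; (\underline{D}^{2J}, \underline{S}^{2J-1})\big)$, and the free $S^1$-action on each $X_i$ assembles to the $T^m$-action of \eqref{eqn:diagonal}; by the regularity discussion preceding Theorem \ref{thm:diagram}, $\ker\lambda \subset T^m$ acts freely on this polyhedral product. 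Hence the bottom-right corner $Z\big(K_P; (\underline{CX}, \underline{X})\big)/\ker\lambda$ of \eqref{eqn:new.diagram} equals $Z\big(K_P; (\underline{D}^{2J}, \underline{S}^{2J-1})\big)/\ker\lambda$, which by the diffeomorphism \eqref{eqn:exponent}, \eqref{eqn:orbit.homeo} is precisely $M(J)$.

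Next I would invoke Lemma \ref{lem:h} and the equivariant homeomorphism $\alpha$ of \eqref{eqn:alpha} (together with its orbit-space version $\overline{\alpha}$) to transport this identification across the right vertical map $\gamma = \overline{\alpha}\circ h^{-1}$ of diagram \eqref{eqn:new.diagram}. Since $\gamma$ is a homeomorphism, the top-right corner $\big(X_1 \times \cdots \times X_m/\ker\lambda\big) \times P^n/\!\!\sim_2$ is homeomorphic to $M(J)$. Spelling this out with $X_i = S^{2j_i-1}$ gives exactly the right-hand side of the asserted formula, namely $\big(S^{2j_1-1}\times\cdots\times S^{2j_m-1}/\ker\lambda\big)\times P^n/\!\!\sim_2$, which is an instance of Construction \ref{def:tilde2}. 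It remains to match this with the left-hand side $\big(T^{d(J)-m+n}\times P(J)\big)/\!\!\sim_{\lambda(J)}$: this is the Davis--Januszkiewicz presentation \eqref{eqn:defn.tm} of $M(J)$ applied to the polytope $P(J)$ with characteristic matrix $\lambda(J)$ of Figure 1, where $m$ and $n$ are replaced according to \eqref{eqn:transform1}, so $T^n$ becomes $T^{d(J)-m+n}$. Both presentations denote the same toric manifold $M(J)$, so the chain of homeomorphisms closes up.

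The main obstacle I anticipate is bookkeeping rather than anything deep: one must be careful that the two a priori different $\ker\lambda$'s — the subgroup of $T^m$ acting on the product of odd spheres, and the subgroup $\ker\lambda(J) \subset T^{d(J)}$ acting on $Z\big(K_P(J);(D^2,S^1)\big)$ — are genuinely identified under \eqref{eqn:exponent}, a point already asserted in the parenthetical remark after \eqref{eqn:orbit.homeo} (that $\ker\lambda$ and $\ker\lambda(J)$ are isomorphic subgroups of $T^{d(J)}$). One also needs the compatibility of the cubical structures: the homeomorphism $\alpha$ depends on a choice of cubical structure on $P^n$, and to land in the polytope $P(J)$ one uses the relation $K_P(J) = K_{P(J)}$ of \eqref{eqn:kp} together with the explicit construction of $P(J)$ from $P^n$. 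Granting these identifications, which are established in \cite{bbcg3}, the theorem follows formally by concatenating the homeomorphisms $h^{-1}$, $\overline{\alpha}^{-1}$, \eqref{eqn:exponent}/\eqref{eqn:orbit.homeo}, and the two Davis--Januszkiewicz presentations of $M(J)$.
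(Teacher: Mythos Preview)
Your proposal is correct and follows essentially the same route as the paper: the paper's proof is simply the one-liner ``This result follows immediately from the right hand side of diagram \eqref{eqn:new.diagram},'' and you have unpacked precisely what that means by substituting $X_i = S^{2j_i-1}$, identifying the bottom-right corner with $M(J)$ via \eqref{eqn:orbit.homeo}, and using the homeomorphism $\gamma$ to reach the Construction~\ref{def:tilde2} space in the top-right corner. Your additional remarks on the bookkeeping (the identification of $\ker\lambda$ with $\ker\lambda(J)$ and the role of $K_P(J)=K_{P(J)}$) are accurate and already implicit in the cited results from \cite{bbcg3}.
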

\begin{proof}
This result follows immediately from the right hand side of diagram \eqref{eqn:new.diagram}. \end{proof}

\nd Notice here that the advantage of the right hand side is the use of the (generalized) Davis-
Januszkiewicz construction with the polytope $P^{n}$, which is in general much smaller than $P(J)$ and has simpler
combinatorics.
\begin{remark}\label{rem:lastcol}
Notice that the part of $\lambda(J)$ in Figure $1$ reproduced below,
\skp{0.5}
\includegraphics[width=6in]{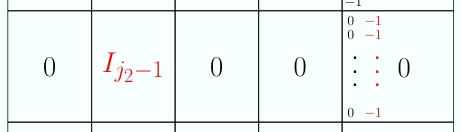}
\skp{0.1}
\centerline{{\bf Figure 2}}
\skp{0.4}
\nd is essentially the $(j_{2}-1)\times j_{2}$ matrix 

\begin{equation}\label{eqn:cplambda}
\left[\begin{array}{rrrrr}
\color{red}1\color{black}    &0     &\cdots&  0  &\color{red}-1\color{black}\\
0    &\color{red}1\color{black}     &\cdots&   0 &\color{red}-1\color{black}\\
\cdot&      &\cdots&\cdot    &\cdot\\
\cdot&      &\cdots&\cdot    &\cdot\\
0      &0    &\cdots&\color{red}1\color{black}   &\color{red}-1\color{black}
\end{array}\right]
\end{equation}

\

\nd which is the characteristic matrix for the diagonal $S^{1}$ action on
 $S^{2j_{2}-1}$. Indeed, the $i^{\rm{th}}$ ``block row'' of $\lambda(J)$ is the characteristic matrix for the diagonal $S^{1}$ action on
 $S^{2j_{i}-1}$. This particular connection to odd spheres becomes evident in the light of Construction \ref{def:tilde2} but
 was not obvious at the time that \cite{bbcg3} was written. This observation becomes relevant in the next section.
\end{remark}

\subsection{A simple illustration}
The toric manifold $\mathbb{C}P^{2}$ is made usually by the construction \eqref{eqn:defn.tm} using a two-simplex 
as the simple polytope.
The diagram below illustrates Construction \ref{def:tilde2} in this case. The ingredients are as follows:

\begin{enumerate}[(1)]\itemsep3pt
\item $P^{n} =  \Delta^{1}$ a one-simplex. Here $n=1$ and $m=2$,
\item $J = (1,2)$ so that $X_{1} = S^{1}$ and $X_{2} = S^{3}$ with the usual free $S^{1}$ action,
\item the characteristic map $\lambda\colon \mathbb{Z}^{2}\longrightarrow \mathbb{Z}$ is given by the matrix
$[1,-1]$ and $\text{ker}\hspace{0.02in}\lambda \cong T^{1}$ sits inside $T^{2}$ as $t \mapsto (t,t^{-1})$.
\end{enumerate}

\begin{rem}
Here, $P(J) = \Delta^{1}(1,2) = \Delta^{2}$ a two-simplex,  the usual polytope used to construct
$\mathbb{C}P^{2}$ as a toric manifold from \eqref{eqn:defn.tm}.
\end{rem}

\nd \hspace{0.0in}\includegraphics[width=6in]{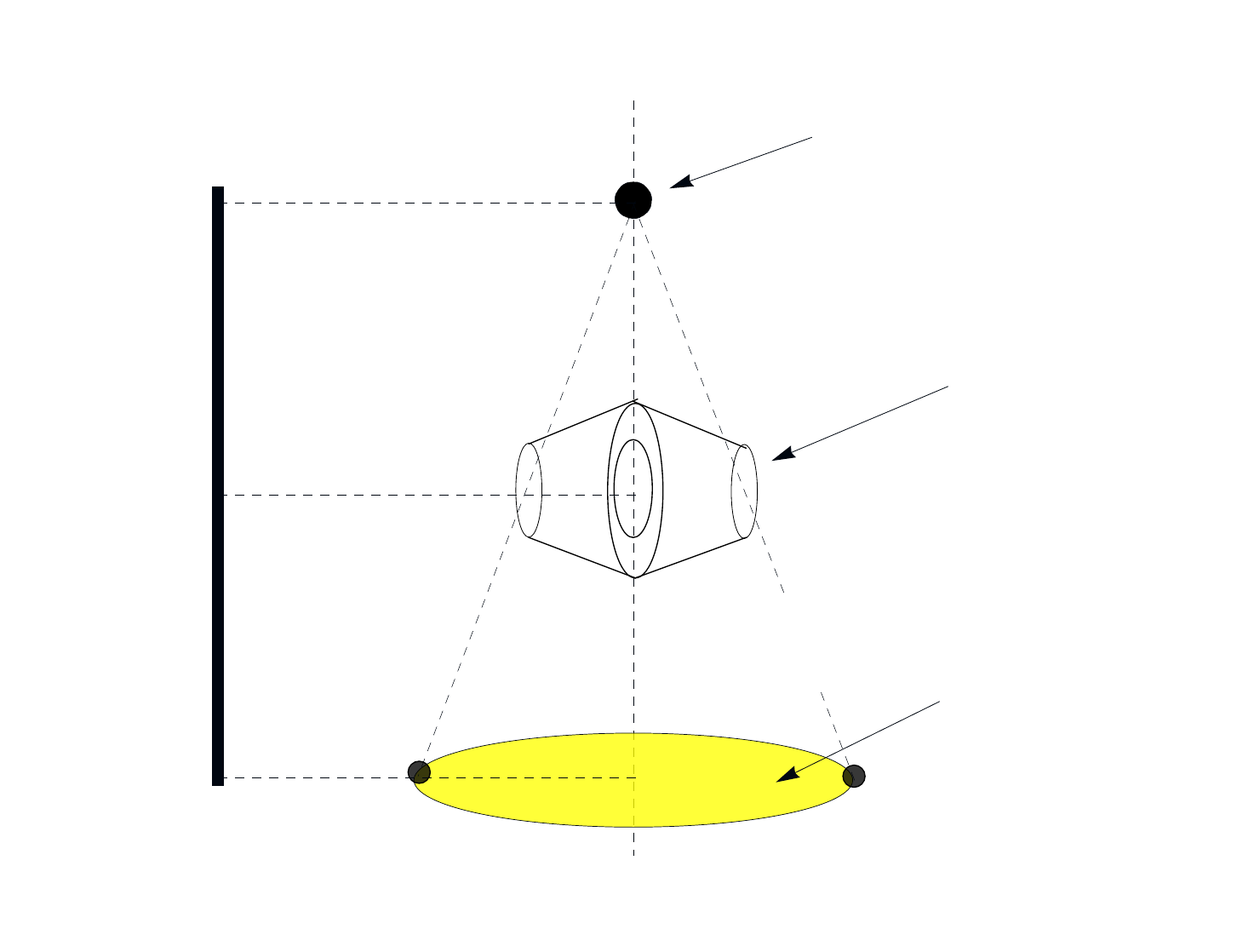}
\skp{-2}\hspace{0.65in}$P = \Delta^{1}$

\skp{-9.3}\hspace{3in}{$(S^{1}\times S^{3})\big/\text{ker}\hspace{0.02in}\lambda \rightsquigarrow 
S^{1}\big/\text{ker}\hspace{0.02in}\lambda = \ast$}

\skp{2.4}\hspace{4.1in}{$(S^{1}\times S^{3})\big/\text{ker}\hspace{0.02in}\lambda \cong S^{3}$}

\skp{3.1}\hspace{3.9in}{$(S^{1}\times S^{3})\big/\text{ker}\hspace{0.02in}\lambda 
\rightsquigarrow S^{3}\big/\text{ker}\hspace{0.02in}\lambda$
\skp{0.1}\hspace{5.1in}$=\; \mathbb{C}P^{1}$}

\nd \skp{2.2}\hspace{1.4in}{\large $\mathbb{C}P^{2} \;\cong\; 
\big(S^{1} \times S^{3}\big/{\rm{ker}}\hspace{0.02in}\lambda\big) 
\times \Delta^{1} \big/\!\!\sim_{2}$}

\

\nd In the diagram, the symbol $\rightsquigarrow$  represents the projection \eqref{eqn:quotients} 
which appears in part (ii) of Construction \ref{def:tilde2}.
The diagram presents  $\mathbb{C}P^{2}$ as the cone on $S^{3}$ attached to $\mathbb{C}P^{1}$
via the Hopf map

$$(S^{1}\times S^{3})\big/\text{ker}\hspace{0.02in}\lambda \stackrel{\pi_{2}}{\longrightarrow} 
S^{3}\big/\text{ker}\hspace{0.02in}\lambda.$$

\

\section{Application to iterated polyhedral products}\label{sec:iterated.pp}
The odd spheres of Section \ref{sec:families} are themselves examples of moment-angle complexes
$$S^{2j_{i}-1} \cong Z\big(K_{\Delta^{j_{i}-1}};\;(D^{2},S^{1})\big)$$

\nd where $K_{\Delta^{j_{i}-1}}$ is the simplicial complex dual to the boundary of the simplex $\Delta^{j_{i}-1}$.
Every moment-angle complex $Z\big(K;(D^{2},S^{1})\big)$ supports a free circle action and so it's natural to
ask about the case $X_{i} = Z\big(K_{i};(D^{2},S^{1})\big)$ in Construction \ref{def:tilde2} for a collection
$\{K_{1},K_{2},\ldots,K_{m}\}$ of arbitrary simplicial complexes. In this case, \eqref{eqn:alpha} becomes
\begin{equation}\label{eqn:czz}
(X_{1} \times X_{2} \times \cdots \times X_{m}) 
\times P^{n}\big/\!\!\sim_{1}\;\cong\;  Z\big(K_{P}; \big(\underline{CZ(K_{i};(D^{2},S^{1}))}, \;
\underline{Z(K_{i};(D^{2},S^{1}))}\big)\big).
\end{equation}

\

\subsection{A generalization of the construction $K(J)$}
\nd The problem of finding an analogue of \eqref{eqn:exponent} and \eqref{eqn:orbit.homeo} now presents
itself. Those diffeomorphisms  follow from \cite[Theorem 7.2]{bbcg3} which is a more general 
result about the behaviour of polyhedral products with respect to ``exponentiation'' of CW pairs. 
Recent work by Anton Ayzenberg \cite{aa}, generalizing this exponentiation construction becomes relevant
to understanding the problem further. A brief description of Ayzenberg's construction, tailored to the context
here, follows.

\

Let $K$ be a simplicial complex  on $m$ vertices and $\{K_{1}, K_{2}, \ldots, K_{m}\}$ a collection of
$m$ simplicial complexes on $j_{1},j_{2},\ldots, j_{m}$ vertices respectively.  From these ingredients, a new 
simplicial complex $K(K_{1}, K_{2}, \ldots, K_{m})$, on $j_{1}+j_{2}+\cdots+ j_{m}$ vertices, is constructed by
\begin{equation}\label{eqn:aa}
K(K_{1}, K_{2}, \ldots, K_{m})\; = \; \bigcup_{\sigma\in K}V_{\sigma}
\;\subset\; \Delta^{j_{1}-1}\ast \Delta^{j_{2}-1}\ast \cdots \ast \Delta^{j_{m}-1}
\end{equation}

\nd where
\begin{equation*}
V_{\sigma} = B_{1}\ast B_{2}\ast \cdots \ast B_{m}\quad {\rm with}\quad
B_i=\left\{\begin{array}{lcl}
\Delta^{j_{i}-1}  &{\rm if} & i\in \sigma\\
K_i &{\rm if} & i\notin \sigma.
\end{array}\right.
\end{equation*}

\begin{remark}
In this language, the construction $K(J)$ at the beginning of subsection \ref{subsec:oddspheres}  is just 
$K\big(\partial\Delta^{j_{1}-1}, \partial\Delta^{j_{2}-1},\ldots,\partial\Delta^{j_{m}-1}\big)$
where $\partial\Delta^{j_{i}-1}$ is the boundary of the $(j_{i}-1)$-simplex.
\end{remark}

\nd For $K = K_{P}$, the result analogous to \eqref{eqn:exponent} is the following.

\begin{thm}\cite[Proposition $5.1$]{aa}\label{thm:aa}
$$Z\big(K_{P}; \big(\underline{(D^{2})^{j_{i}}}, \;
\underline{Z(K_{i};(D^{2},S^{1})}\big)\big) \;=\; Z\big(K_{P}(K_{1},K_{2},\ldots,K_{m});(D^{2} ,S^{1})\big).$$
\end{thm}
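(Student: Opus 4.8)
The plan is to verify the claimed equality of polyhedral products by comparing them simplex-by-simplex over $K_P$, using the standard fact that a polyhedral product $Z(K;(\underline{X},\underline{A}))$ is the colimit (union) $\bigcup_{\sigma\in K} Z(\sigma)$ where $Z(\sigma)=\prod_{i\in\sigma}X_i\times\prod_{i\notin\sigma}A_i$ sits inside $\prod_{i=1}^{m}X_i$. First I would record the two decompositions: on the left, for $\sigma\in K_P$, the piece is $\prod_{i\in\sigma}(D^2)^{j_i}\times\prod_{i\notin\sigma}Z(K_i;(D^2,S^1))$; on the right, the vertex set of $K_P(K_1,\dots,K_m)$ is partitioned into blocks of sizes $j_1,\dots,j_m$, and a simplex of $K_P(K_1,\dots,K_m)$ lies in some $V_\sigma=B_1*\cdots*B_m$ with $B_i=\Delta^{j_i-1}$ for $i\in\sigma$ and $B_i=K_i$ for $i\notin\sigma$, so the corresponding piece of $Z(K_P(K_1,\dots,K_m);(D^2,S^1))$ over the join $V_\sigma$ is $Z(B_1;(D^2,S^1))\times\cdots\times Z(B_m;(D^2,S^1))$.

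The key step is the identification, for each $i$, of the $i$-th factor. When $i\in\sigma$ we use that $Z(\Delta^{j_i-1};(D^2,S^1))=(D^2)^{j_i}$ since the full simplex on $j_i$ vertices forces all coordinates into $D^2$; this matches the left-hand factor $(D^2)^{j_i}$. When $i\notin\sigma$ the $i$-th factor on the right is literally $Z(K_i;(D^2,S^1))$, matching the left-hand factor. Thus for each fixed $\sigma\in K_P$ the two pieces agree as subspaces of $\prod_{i=1}^m (D^2)^{j_i}=(D^2)^{d(J)}$, where I am using the obvious identification $(D^2)^{j_i}\cong (D^2)^{j_i}$ coordinatewise and the compatible splitting of the ambient product according to the block structure. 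I would also need the elementary fact that the polyhedral product over a join $K*L$ factors as a product $Z(K;(\underline{X},\underline{A}))\times Z(L;(\underline{Y},\underline{B}))$ (for disjoint vertex sets), applied iteratively to $V_\sigma=B_1*\cdots*B_m$; this is standard (see \cite{bbcg}) and handles the right-hand side.

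Having matched pieces, I would check that the two families of pieces, indexed by $\sigma\in K_P$, are compatible with inclusions: if $\sigma'\subset\sigma$ in $K_P$ then the piece for $\sigma'$ includes into the piece for $\sigma$ on both sides, and these inclusions correspond under the coordinatewise identification, because $B_i'\subseteq B_i$ for each $i$ (namely $K_i\subseteq\Delta^{j_i-1}$ when $i$ passes from outside $\sigma'$ to inside $\sigma$). Therefore the colimits agree, and since $K_P(K_1,\dots,K_m)=\bigcup_{\sigma\in K_P}V_\sigma$ by \eqref{eqn:aa}, the colimit of the right-hand pieces is exactly $Z\big(K_P(K_1,\dots,K_m);(D^2,S^1)\big)$, while the colimit of the left-hand pieces is $Z\big(K_P;(\underline{(D^2)^{j_i}},\underline{Z(K_i;(D^2,S^1))})\big)$. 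This yields the asserted equality.

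The main obstacle is bookkeeping rather than conceptual: one must be careful that the ``union inside a join of simplices'' in \eqref{eqn:aa} is compatible with the ``union inside a product of discs'' description of polyhedral products, i.e. that the ambient identification $\Delta^{j_1-1}*\cdots*\Delta^{j_m-1}\rightsquigarrow (D^2)^{d(J)}$ (via $Z(\Delta^{j-1};(D^2,S^1))=(D^2)^{j}$ on each block) is consistent across all the $V_\sigma$ simultaneously, so that the two colimits are taken over the same diagram of subspaces of the same ambient space. Once the block decomposition is fixed, this consistency is automatic, but it is the point that must be stated carefully; everything else reduces to the join-splitting lemma for polyhedral products and the trivial computation $Z(\Delta^{j-1};(D^2,S^1))=(D^2)^j$.
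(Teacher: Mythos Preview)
The paper does not actually prove this theorem; it simply cites it as \cite[Proposition~5.1]{aa} and moves on. Your argument is a correct direct proof: decompose both sides as colimits over $\sigma\in K_P$, use the join-splitting property of polyhedral products to factor the right-hand piece over $V_\sigma = B_1 * \cdots * B_m$, and match factors via $Z(\Delta^{j_i-1};(D^2,S^1)) = (D^2)^{j_i}$ when $i\in\sigma$ and $Z(K_i;(D^2,S^1))$ when $i\notin\sigma$. This is exactly the standard approach to results of this type and is essentially how Ayzenberg argues; the only care needed, which you flag, is that the block identification of the ambient product $(D^2)^{d(J)}$ is fixed once and for all so that the unions on the two sides are taken inside the same space.
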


\nd The next task is to relate these spaces to the right hand side of \eqref{eqn:czz}. The following proposition
addresses this point. As usual, set $d(J) = j_{1}+j_{2}+\cdots+ j_{m}$ and, to simplify the notation, set
$$Z(K_{i}) \;\becomes\; Z(K_{i};(D^{2},S^{1})).$$

\begin{prop}\label{prop:he}
There is  a $T^{d(J)}\;$-equivariant homotopy equivalence of polyhedral products
\begin{equation}\label{eqn:hediscs}
Z\big(K_{P}; \big(\underline{CZ(K_{i})}, \; \underline{Z(K_{i})}\big)\big) \simeq 
Z\big(K_{P}; \big(\underline{(D^{2})^{j_{i}}}, \; \underline{Z(K_{i})}\big)\big).
\end{equation}
\end{prop}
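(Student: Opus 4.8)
The plan is to produce the homotopy equivalence factorwise on the building blocks of the polyhedral product and then assemble. Recall that $Z(K_i) = Z(K_i;(D^2,S^1))$ carries a free $S^1$-action (the diagonal circle inside the ambient torus $T^{j_i}$ acting on $Z(K_i) \subseteq (D^2)^{j_i}$), and that $(D^2)^{j_i}$ is equivariantly contractible for this same circle. The first step is to exhibit, for each $i$, a $T^{j_i}$-equivariant homotopy equivalence of pairs
$$\bigl(CZ(K_i),\,Z(K_i)\bigr)\;\simeq\;\bigl((D^2)^{j_i},\,Z(K_i)\bigr),$$
where on the left the circle acts on $CZ(K_i)$ by preserving the cone parameter (as set up just before Theorem \ref{thm:diagram}) and the remaining torus coordinates act as on $Z(K_i)$. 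Both total spaces are contractible and both contain $Z(K_i)$ as the ``boundary'' incarnation of the subspace; the inclusion $Z(K_i)\hookrightarrow CZ(K_i)$ is a cofibration, and the inclusion $Z(K_i)\hookrightarrow (D^2)^{j_i}$ is the standard one. One builds the equivalence by choosing an equivariant deformation retraction of $(D^2)^{j_i}$ onto the cone neighbourhood of $Z(K_i)$, or more cleanly, by noting that $CZ(K_i)$ is the mapping cylinder of $Z(K_i)\hookrightarrow (D^2)^{j_i}$ up to equivariant homotopy, since $(D^2)^{j_i}$ is equivariantly contractible and the inclusion of $Z(K_i)$ is a cofibration between spaces on which the torus acts. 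Thus the two pairs are equivariantly homotopy equivalent rel $Z(K_i)$.

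The second step is to upgrade this to the polyhedral product. By the standard functoriality of polyhedral products in the pair argument (\cite{bbcg}), a levelwise map of CW pairs $(X_i,A_i)\to(Y_i,B_i)$ that is a homotopy equivalence on each factor and restricts to a homotopy equivalence $A_i \simeq B_i$ induces a homotopy equivalence
$$Z\bigl(K_P;(\underline{X_i},\underline{A_i})\bigr)\;\simeq\;Z\bigl(K_P;(\underline{Y_i},\underline{B_i})\bigr).$$
Applying this with $(X_i,A_i) = (CZ(K_i),Z(K_i))$ and $(Y_i,B_i) = ((D^2)^{j_i},Z(K_i))$ gives \eqref{eqn:hediscs} as a homotopy equivalence of spaces. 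The equivariance is inherited: the map of pairs constructed in the first step is $T^{j_i}$-equivariant for each $i$, hence the induced map on polyhedral products is equivariant for the product action of $T^{d(J)} = T^{j_1}\times\cdots\times T^{j_m}$, because the polyhedral product functor is natural with respect to equivariant maps of pairs and the group action on $Z\bigl(K_P;(\underline{\,\cdot\,},\underline{\,\cdot\,})\bigr)$ is precisely the restriction of the product action on the ambient product of the first coordinates.

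The main obstacle is the first step: producing the equivalence of pairs \emph{equivariantly} and \emph{rel $Z(K_i)$} simultaneously. One must be careful that the deformation retraction of $(D^2)^{j_i}$ is chosen to commute with the diagonal $S^1$-action and to be stationary on $Z(K_i)$; a naive radial contraction of $(D^2)^{j_i}$ to the origin is $S^1$-equivariant but does not fix $Z(K_i)$, so instead one should contract $(D^2)^{j_i}$ onto a closed cone neighbourhood of $Z(K_i)$ inside it, identify that neighbourhood equivariantly with $CZ(K_i)$, and check that the inclusion of the neighbourhood is an equivariant cofibration. Once this local model is in hand the globalization over $K_P$ is routine, and the torus-equivariance, being built in factorwise, carries through without further work.
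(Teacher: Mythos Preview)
Your proposal is correct and follows the same two--step outline as the paper: first produce, for each $i$, a $T^{j_i}$--equivariant homotopy equivalence of pairs $\bigl(CZ(K_i),Z(K_i)\bigr)\simeq\bigl((D^2)^{j_i},Z(K_i)\bigr)$, then invoke functoriality of the polyhedral product (the paper cites \cite[Lemma~2.2.1]{denham.suciu}) to assemble over $K_P$.

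Where you differ is in the first step. You argue abstractly: both total spaces are equivariantly contractible, the inclusions of $Z(K_i)$ are cofibrations, so the pairs are equivalent rel $Z(K_i)$; you then flag the honest work of making this equivariant and rel $Z(K_i)$ simultaneously. The paper instead writes down a specific map. Since each $K_i$ embeds simplicially in $\partial\Delta^{j_i-1}$, one gets a $T^{j_i}$--equivariant inclusion $Z(K_i)\hookrightarrow Z\bigl(\partial\Delta^{j_i-1};(D^2,S^1)\bigr)=\partial\bigl((D^2)^{j_i}\bigr)\cong S^{2j_i-1}$. Coning (with the torus acting by preserving the cone parameter) gives $CZ(K_i)\to CS^{2j_i-1}\cong D^{2j_i}$, and composing with a standard $T^{j_i}$--equivariant diffeomorphism $D^{2j_i}\cong (D^2)^{j_i}$ yields the desired map of pairs directly. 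This buys exactly what you identified as the obstacle: the map is explicit, its equivariance is manifest at every stage, and it restricts to the given inclusion on $Z(K_i)$ without having to manufacture an equivariant deformation retraction onto a cone neighbourhood. Your abstract route reaches the same conclusion but leans on general equivariant cofibration facts; the paper's route sidesteps that by exploiting the concrete combinatorics $K_i\subset\partial\Delta^{j_i-1}$.
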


\begin{proof}
\nd Let $K$ be a simplicial complex on $j$ vertices. Then If $K$ is not the  $(j-1)$-simplex $\Delta^{j-1}$,  there is
simplicial embedding $K \longrightarrow  \partial{\Delta^{j-1}}$
\nd into the boundary. This induces an inclusion
\begin{equation}\label{eqn:embed}
Z\big(K;(D^{2},S^{1})\big) \longrightarrow Z\big(\partial{\Delta^{j-1}};(D^{2},S^{1})\big)
= \; \partial\big((D^{2})^{j}\big) \;\cong\;  S^{2j-1}
\end{equation}

\nd equivariant with respect to the action of the $j$-torus $T^{j}$. In turn, this
extends to an equivariant homotopy equivalence on cones:
$$CZ\big(K;(D^{2},S^{1})\big) \longrightarrow CS^{2j-1} \simeq D^{2j}$$ 

\nd where the action of $T^{j}$ preserves the  cone parameter. Next, choose a standard $T^{j}$-- equivariant diffeomorphism $h\colon D^{2j} \longrightarrow (D^{2})^{j}$ to get  an equivariant homotopy equivalence of CW pairs
\begin{equation}\label{eqn:pairs}
\big(CZ\big(K;(D^{2},S^{1})\big), Z\big(K;(D^{2},S^{1})\big)\big)\; \stackrel{h}{\longrightarrow}
\; \big((D^{2})^{j}, Z\big(K;(D^{2},S^{1})\big)\big).
\end{equation}

\nd The functorial properties of the polyhedral product \cite[Lemma 2.2.1]{denham.suciu} and an application of 
\eqref{eqn:pairs} for each $i = 1,2,\ldots,m$ completes the proof.\end{proof}

\begin{rem} In the case that $j = 4$ and $K$ is dual to the boundary of the square, the inclusion \eqref{eqn:embed} is
$$Z\big(K;(D^{2},S^{1})\big) \;=\; S^{3}\times S^{3}\; \longrightarrow \;
Z\big(\partial{\Delta^{3}};(D^{2},S^{1})\big)\; \simeq\; S^{7}\qquad \big(\simeq\; S^{3}\ast S^{3}\big)$$

\nd and the corresponding homotopy equivalence of pairs is
 $$\big(C(S^{3}\times S^{3}), \;S^{3}\times S^{3}\big)\;
\longrightarrow \; \big((D^{2})^{4},\; S^{3}\times S^{3}\big).$$

\nd equivariant with respect to the action of $T^{4}$.
\end{rem}

\subsection{The case of moment-angle complexes}
As before, let  $P^{n}$ be a simple polytope having $m$ facets, equipped with 
a {\em characteristic\/} function 
$$\lambda \colon \mathcal{F} \longrightarrow \mathbb{Z}^n$$

\nd satisfying the regularity condition following \eqref{eqn:lambda}. Regarding  
$$\text{ker}\hspace{0.02in}\lambda \hookrightarrow T^{m} \hookrightarrow T^{d(J)}$$ 

\nd as in \eqref{eqn:diagonal} and the case of odd spheres,  there is a natural {\em free\/} action of 
$\text{ker}\hspace{0.02in}\lambda$
on both sides of \eqref{eqn:hediscs} yielding a homotopy equivalence of orbit spaces
\begin{equation}\label{eqn:heorbits}
Z\big(K_{P}; \big(\underline{CZ(K_{i})}, \; \underline{Z(K_{i})}\big)\big)\big/\text{ker}\hspace{0.02in}\lambda\; \simeq \;
Z\big(K_{P}; \big(\underline{(D^{2})^{j_{i}}}, \; \underline{Z(K_{i})}\big)\big)\big/\text{ker}\hspace{0.02in}\lambda.
\end{equation}

\nd Combining Theorems \ref{eqn:new.diagram}, \ref{eqn:heorbits} and \eqref{eqn:heorbits} gives now
the main observation of this section.

\begin{thm}
For a simple polytope $P^{n}$, characteristic function $\lambda$ and $X_{i} = Z(K_{i})$, Construction \ref{def:tilde2} corresponds, up to homotopy, to a quotient of a moment-angle complex
by a free action of $\text{ker}\hspace{0.02in}\lambda$  as follows:
\begin{align*}
\big(X_1 \times X_2 \times \cdots \times X_m\big/{\rm{ker}}\hspace{0.02in}\lambda\big) 
\times P^n \big/\!\!\sim_{2} &\;\cong\;  Z\big(K_{P}; \big(\underline{CZ(K_{i})}, 
\underline{Z(K_{i})}\big)\big)\big/\rm{ker}\hspace{0.02in}\lambda \\
&\;\simeq\; Z\big(K_{P}; \big(\underline{(D^{2})^{j_{i}}}, 
\underline{Z(K_{i}}\big)\big)\big/\rm{ker}\hspace{0.02in}\lambda\\
&\; \cong\; Z\big(K_{P}(K_{1},K_{2},\ldots,K_{m}); (D^{2} ,S^{1})\big)\big/\rm{ker}\hspace{0.02in}\lambda. 
\end{align*}
\end{thm}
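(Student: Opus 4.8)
The plan is to assemble the three-line chain of identifications by citing, in order, the constructions and results already in hand. First I would apply Theorem \ref{thm:diagram}: the right-hand side of diagram \eqref{eqn:new.diagram}, together with the homeomorphism $\gamma = \overline{\alpha}\circ h^{-1}$, identifies $\bigl(X_1 \times \cdots \times X_m\big/{\rm ker}\,\lambda\bigr) \times P^n\big/\!\!\sim_{2}$ with $Z\bigl(K_{P}; (\underline{CX}, \underline{X})\bigr)\big/{\rm ker}\,\lambda$. Substituting $X_i = Z(K_i)$ and using \eqref{eqn:czz} (which is just the specialization of \eqref{eqn:alpha} to this choice of $X_i$) rewrites the target as $Z\bigl(K_{P}; (\underline{CZ(K_i)}, \underline{Z(K_i)})\bigr)\big/{\rm ker}\,\lambda$, giving the first ($\cong$) line.

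Next I would invoke Proposition \ref{prop:he}, or rather its orbit-space consequence \eqref{eqn:heorbits}, to pass from $Z\bigl(K_{P}; (\underline{CZ(K_i)}, \underline{Z(K_i)})\bigr)\big/{\rm ker}\,\lambda$ to $Z\bigl(K_{P}; (\underline{(D^2)^{j_i}}, \underline{Z(K_i)})\bigr)\big/{\rm ker}\,\lambda$; this is the second ($\simeq$) line and the only step that is genuinely a homotopy equivalence rather than a homeomorphism. Here one must be a little careful that the $T^{d(J)}$-equivariance asserted in Proposition \ref{prop:he} does restrict to ${\rm ker}\,\lambda$-equivariance under the inclusion ${\rm ker}\,\lambda \hookrightarrow T^m \hookrightarrow T^{d(J)}$ of \eqref{eqn:diagonal}, and that the action is free on both sides so that the quotient of an equivariant homotopy equivalence is again a homotopy equivalence — but both points are exactly what \eqref{eqn:heorbits} records, so it suffices to cite it.

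Finally I would apply Theorem \ref{thm:aa} (Ayzenberg's Proposition $5.1$), which furnishes a $T^{d(J)}$-equivariant identification $Z\bigl(K_{P}; (\underline{(D^2)^{j_i}}, \underline{Z(K_i)})\bigr) = Z\bigl(K_{P}(K_1,\ldots,K_m); (D^2, S^1)\bigr)$; quotienting both sides by the free ${\rm ker}\,\lambda$-action yields the third ($\cong$) line. One subtlety worth a sentence: on the right-hand side ${\rm ker}\,\lambda$ must be viewed as a subgroup of the torus acting on the moment-angle complex $Z(K_{P}(K_1,\ldots,K_m);(D^2,S^1))$, and the identification of this subgroup with the original ${\rm ker}\,\lambda \subset T^m$ is via the composite $T^m \hookrightarrow T^{d(J)}$ of diagonal inclusions — exactly as in the passage from \eqref{eqn:exponent} to \eqref{eqn:orbit.homeo} in the odd-spheres case, so the present statement is the natural common generalization of that argument and of Theorem \ref{thm:mj}. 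The main obstacle, such as it is, is purely bookkeeping: keeping the several torus actions and their kernels consistently identified across the three cited results so that the ${\rm ker}\,\lambda$-quotients can legitimately be taken levelwise; there is no new geometric input beyond what Theorems \ref{thm:diagram} and \ref{thm:aa} and Proposition \ref{prop:he} already provide.
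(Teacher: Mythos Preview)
Your proposal is correct and follows essentially the same route as the paper: the theorem is stated there as the ``main observation'' obtained by combining Theorem \ref{thm:diagram}, the orbit-space equivalence \eqref{eqn:heorbits} (from Proposition \ref{prop:he}), and Theorem \ref{thm:aa}, which is exactly the three-step chain you outline. Your write-up is in fact more explicit than the paper's about the torus bookkeeping and the freeness needed to pass to quotients, but no different in substance.
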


\nd It should be noted that in general, the space
$Z\big(K_{P}(K_{1},K_{2},\ldots,K_{m}); (D^{2} ,S^{1})\big)\big/\rm{ker}\hspace{0.02in}\lambda\;$ might not be a manifold
because $K_{P}(K_{1},K_{2},\ldots,K_{m})$ is dual to the boundary of a simple polytope only in the case that all the $K_{i}$ are boundaries of simplices.

\

\section{Further generalizations}\label{sec:further.pp}
Away from the diagonal circle action, the situation becomes a little more complicated. Ayzenberg's
construction \cite{aa}, can be done in the realm of polytopes. In particular, given a simple polytope
$P^{n}$ having $m$ facets and a sequence of simple polytopes
$\{P_{1},P_{2},\ldots,P_{m}\}$ where $P_{i} = P_{i}^{n_{i}}$ is a simple polytope of dimension $n_{i}$ having $j_{i}$ facets, the construction yields a new polytope
$$P^{n} \longrightarrow P^{n}\big(P_{1}, P_{2}, \ldots,P_{m}\big).$$

Though the new polytope is not simple when the $P_{i}$ differ from simplices, it does retain some
nice properties. A simplicial complex $K_{P(P_{1}, P_{2}, \ldots, P_{m})}$ is associated to it as the {\em nerve complex\/}. On the level of simplicial complexes, the construction is written
$$K_{P} \longrightarrow K_{P}(K_{P_{1}}, K_{P_{2}}, \ldots, K_{P_{m}})$$

\nd as in \eqref{eqn:aa}. As expected, it is shown in \cite{aa} that
$$K_{P(P_{1}, P_{2}, \ldots, P_{m})} \;=\; K_{P}(K_{P_{1}}, K_{P_{2}}, \ldots, K_{P_{m}}).$$

\nd Under this operation, the numbers $m$ and $n$ transform by the analogue of \eqref{eqn:transform1}:
\begin{equation}\label{eqn:transform2}
\left[\begin{array}{c}
m\\
n\\
m-n
\end{array}\right]
\rightsquigarrow
\left[\begin{array}{c}
d(J) = j_{1}+j_{2}+\cdots+j_{m}\\
n + N\\
d(J) - n - N
\end{array}\right]
\end{equation}

\nd where $N =  n_{1}+n_{2}+\cdots + n_{m}$. 
Notice that \eqref{eqn:transform2} reduces to \eqref{eqn:transform1} for the case $P_{i} = \Delta^{j_{i}-1}$.

\subsection{A full torus action} For each $i$ = 1,2,\ldots,m, let 
$$\lambda_{i}  \colon \mathbb{Z}^{j_{i}} \longrightarrow \mathbb{Z}^{n_{i}}$$

\nd be a characteristic function on $P_{i}$. It has 
$\text{ker}\hspace{0.02in}\lambda_{i} \cong T^{j_{i}-n_{i}}\;$ which acts freely on 
$Z\big(K_{P_{i}}; (D^{2},S^{1})\big)$.
The next step is to mimic the construction of $\lambda(J)$ in section \ref{subsec:oddspheres}.
To this end, denote by $\overline{\lambda}_{i}$ the first $j_{i}-1$ columns of the 
$(n_{i}\times j_{i})$--matrix $(\lambda_{i}^{lk})$ corresponding to $\lambda_{i}$. The last column of 
$(\lambda_{i}^{ lk})$ is 

\begin{equation}\label{eqn:lastcol}
\begin{bmatrix}
\lambda_{i}^{ 1j_{i}}\\
\\
\lambda_{i}^{ 2j_{i}}\\
\cdot\\
\cdot\\
\cdot\\
\lambda_{i}^{ n_{i}j_{i}}
\end{bmatrix}
\end{equation}
\skp{0.25}
\nd Entirely by analogy with the case of odd spheres in section \ref{subsec:oddspheres}, in particular
Remark \ref{rem:lastcol}, the matrices
$\lambda,\; \lambda_{1}, \lambda_{2},\ldots,\lambda_{m}$ are used to construct
an $\big((n + N)\times d(J)\big)$--matrix $\lambda(J,N)$, shown in Figure 3,
which defines a map
$$\lambda(J,N)\colon\; \mathbb{Z}^{d(J)} \longrightarrow \mathbb{Z}^{n+N}.$$

\nd The characteristic matrix corresponding to the diagonal $S^{1}$ action on the odd sphere
$S^{2j_{i}-1}$ which appears in \eqref{eqn:cplambda}, is replaced by $\lambda_{i}$  which has has 
$\text{ker}\hspace{0.02in}\lambda_{i} \cong T^{j_{i}-n_{i}}\;$ acting freely on 
$Z\big(K_{P_{i}}; (D^{2},S^{1})\big)$. The blocks $I_{j_{i}-1}$ in Figure 1 are replaced by 
$\overline{\lambda}_{i}$ and the last columns of ``$-1$'', by \eqref{eqn:lastcol}.

\newpage
\setlength{\unitlength}{8.5mm}
%\newcounter{qn}
\hoffset=0.0in
\voffset=0.0in
\setlength{\oddsidemargin}{12pt}
\begin{picture}(10,50)

%column 1

\put(0,44){\framebox(3,4)[c]{\huge{$\overline{\lambda}_{1}$}}}
\put(0,40){\framebox(3,4)[c]{\huge{$0$}}}
\put(0,36){\framebox(3,4)[c]
{\begin{picture}(4,4)
\put(2,1){$\centerdot$}
\put(2,2){$\centerdot$}
\put(2,3){$\centerdot$}
\end{picture}}}
\put(0,32){\framebox(3,4)[c]{\huge{$0$}}}
\put(0,28){\framebox(3,4)[c]{\huge{$0$}}}

%column 2

\put(3,44){\framebox(3,4)[c]{\huge{$0$}}}
\put(3,40){\framebox(3,4)[c]{\huge{$\overline{\lambda}_{2}$}}}
\put(3,36){\framebox(3,4)[c]{\huge{$0$}}}
\put(3,32){\framebox(3,4)[c]
{\begin{picture}(4,4)
\put(2,1){$\centerdot$}
\put(2,2){$\centerdot$}
\put(2,3){$\centerdot$}
\end{picture}}}
\put(3,28){\framebox(3,4)[c]{\huge{$0$}}}

%column 3

\put(6,44){\framebox(3,4)[c]{$\centerdot\;\centerdot\;\centerdot$}}
\put(6,40){\framebox(3,4)[c]{\huge{$0$}}}
\put(6,36){\framebox(3,4)[c]
{\begin{picture}(4,4)
\put(2.5,1){$\centerdot$}
\put(1.85,2){$\centerdot$}
\put(1.2,3){$\centerdot$}
\end{picture}}}
\put(6,32){\framebox(3,4)[c]{\huge{$0$}}}
\put(6,28){\framebox(3,4)[c]{\huge{$0$}}}

%column 4

\put(9,44){\framebox(3,4)[c]{\huge{$0$}}}
%\put(9,44){\framebox(3,4)[c]{$\centerdot\;\centerdot\;\centerdot$}}
\put(9,40){\framebox(3,4)[c]{\huge{$0$}}}
\put(9,36){\framebox(3,4)[c]{\huge{$0$}}}
\put(9,32){\framebox(3,4)[c]{\huge{$\overline{\lambda}_{m}$}}}
\put(9,28){\framebox(3,4)[c]{\huge{$0$}}}

%column 5
\put(12,44){\framebox(4,4)[l]{\begin{picture}(4,4)
\put(0.1,3.5){$\lambda_{1}^{1j_{1}}$}
\put(0.1,2.7){$\lambda_{1}^{2j_{1}}$}
\put(0.1,1.3){\begin{picture}(1,1)
\put(0.36,1){$\centerdot$}
\put(0.36,0.5){$\centerdot$}
\put(0.36,0){$\centerdot$}
\end{picture}}
\put(0.1,0.2){$\lambda_{1}^{n_{1}j_{1}}$}
\end{picture}}}
\put(14,45.65){{\huge{$0$}}}

%column 5 row 2
\put(12,40){\framebox(4,4)[l]{\begin{picture}(4,4)
\put(0.2,3.5){$0$}
\put(0.2,2.7){$0$}
\put(0.2,1.3){\begin{picture}(1,1)
\put(0.2,1){$\centerdot$}
\put(0.2,0.5){$\centerdot$}
\put(0.2,0){$\centerdot$}
\end{picture}}
\put(0.2,0.2){$0$}
\put(0.8,3.5){$\lambda_{2}^{1j_{2}}$}
\put(0.8,2.7){$\lambda_{2}^{2j_{2}}$}
\put(0.5,1.3){\begin{picture}(1,1)
\put(0.5,1){$\centerdot$}
\put(0.5,0.5){$\centerdot$}
\put(0.5,0){$\centerdot$}
\end{picture}}
\put(0.8,0.2){$\lambda_{2}^{n_{2}j_{2}}$}
\end{picture}}}
\put(14,41.65){{\huge{$0$}}}

%column 5 row 3
\put(12,36){\framebox(4,4)[c]
{\begin{picture}(4,4)
\put(2,1){$\centerdot$}
\put(2,2){$\centerdot$}
\put(2,3){$\centerdot$}
\end{picture}}}

%column 5 row 4
\put(12,32){\framebox(4,4)[l]{\begin{picture}(4,4)
\put(1.9,1.7){{\huge{$0$}}}
\put(2.8,3.5){$\lambda_{m}^{1j_{m}}$}
\put(2.8,2.7){$\lambda_{m}^{2j_{m}}$}
\put(2.8,1.3){\begin{picture}(1,1)
\put(0.4,1){$\centerdot$}
\put(0.4,0.5){$\centerdot$}
\put(0.4,0){$\centerdot$}
\end{picture}}
\put(2.8,0.2){$\lambda_{m}^{n_{m}j_{m}}$}
\end{picture}}}

%column 5 row 5
\put(12,28){\framebox(4,4)[c]{\huge{$\lambda$}}}
\put(12.3,27.5){$\bm{1}$}
\put(13,27.5){$\bm{2}$}
\put(14,27.5){$\bm{\cdots}$}
\put(15.4,27.5){$\bm{m}$}

\put(16.2,31.5){$\bm{1}$}
\put(16.2,30.9){$\bm{2}$}
\put(15.7,29.25){\begin{picture}(1,1)
\put(0.57,1){$\centerdot$}
\put(0.57,0.5){$\centerdot$}
\put(0.57,0){$\centerdot$}
\end{picture}}
\put(16.2,28.15){$\bm{n}$}
\put(4.6,25.8){\large{{\bf Figure 3.}} The matrix $\lambda(N,J)$}
\end{picture}

\setlength{\vfuzz}{2mm} 
\setlength{\textwidth}{160mm}
\setlength{\textheight}{205mm} 
\setlength{\oddsidemargin}{0pt}
\setlength{\evensidemargin}{0pt}
\hoffset=0.0in

\subsection{The rank of the matrix $\lambda(J,N)$} The matrix corresponding to
$\lambda$ is a characteristic matrix and so  can be written in {\em refined\/} block form as:
$$\lambda = I_{n}\big|S$$

\nd  where $I_{n}$ is the $n\times n$-identity matrix and $S$ is of size\; $n\times (m-n)$.
Similarly, the matrix corresponding to $\overline{\lambda}_{i}$ can be written in the block form as 
$$I_{n_{i}}\big|S_{i}$$

\nd  where $I_{n_{i}}$ is the $n_{i}\times n_{i}$-identity matrix and $S_{i}$ is of size\;
$n_{i}\times (j_{i}-1-n_{i})$. This observation allows the conclusion that 
the row rank of $\lambda(J,N)$ is $N+n$ and the next proposition follows.

\begin{prop}
The row rank of the matrix $\lambda(J,N)$ is $N+n$ and so 
$${\rm{ker}}\hspace{0.02in}\lambda(J,N) \;\cong\; T^{d(J)-N-n}.$$
\end{prop}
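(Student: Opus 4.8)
The plan is to exhibit an explicit ordering of the rows and columns of $\lambda(J,N)$ that displays a triangular block structure from which the rank is immediate. First I would use the refined block form of each characteristic matrix: write $\lambda = I_n \,|\, S$ with $S$ of size $n\times(m-n)$, and $\overline{\lambda}_i = I_{n_i}\,|\,S_i$ with $S_i$ of size $n_i\times(j_i-1-n_i)$, as stated just before the proposition. Within the $i$-th ``block row'' of Figure~3, the columns indexed by $v_{i2},\ldots,v_{ij_i}$ carry $\overline{\lambda}_i$, while the final ``$\lambda$-block'' of columns (indexed by $v_{11},\ldots,v_{m1}$) carries the last column \eqref{eqn:lastcol} of $\lambda_i$ in its $i$-th slot and zeros elsewhere. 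So after the obvious permutation that groups the $n_i$ rows of each $\overline{\lambda}_i$ together, the matrix $\lambda(J,N)$ is block lower-triangular with diagonal blocks $\overline{\lambda}_1,\ldots,\overline{\lambda}_m,\lambda$.

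Next I would observe that each diagonal block has full row rank: $\overline{\lambda}_i = I_{n_i}\,|\,S_i$ has rank $n_i$ because of the identity submatrix, and $\lambda = I_n\,|\,S$ has rank $n$ for the same reason. A block lower-triangular matrix whose diagonal blocks each have full row rank has full row rank equal to the sum of those ranks, namely $n_1 + n_2 + \cdots + n_m + n = N + n$. (To make the ``full row rank'' claim clean one can simply row-reduce: the identity submatrices in the diagonal blocks let one clear everything below them, since the off-diagonal entries in Figure~3 to the left of the $\overline{\lambda}_i$'s are zero and the only nonzero off-diagonal contributions sit in the last $\lambda$-block of columns, which lies strictly to the right of all the $\overline{\lambda}_i$ pivots.) Hence $\operatorname{rank}\lambda(J,N) = N + n$.

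Finally, since $\lambda(J,N)\colon \mathbb{Z}^{d(J)}\to\mathbb{Z}^{n+N}$ has full row rank $n+N$, its image is a finite-index subgroup of $\mathbb{Z}^{n+N}$ — indeed, the $(n+N)\times(n+N)$ minor formed by the pivot columns is $\pm 1$, since each $\overline{\lambda}_i$ contributes an $I_{n_i}$ and $\lambda$ contributes an $I_n$ and the triangular structure makes the whole minor their product up to sign — so $\lambda(J,N)$ is surjective. Therefore $\ker\lambda(J,N)$ is a free abelian group of rank $d(J) - (n+N)$ and a direct summand of $\mathbb{Z}^{d(J)}$, which gives $\ker\lambda(J,N)\cong T^{d(J)-N-n}$ upon passing to the torus, exactly as for $\ker\lambda\cong T^{m-n}$ in Section~\ref{sec:djreview}.

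The main obstacle is purely bookkeeping: one must verify carefully from Figure~3 that the off-diagonal blocks really do sit below-and-right of the pivots in the chosen ordering, i.e. that the only columns interacting with more than one $\overline{\lambda}_i$-row-block are those in the final $\lambda$-block (the columns $v_{11},\ldots,v_{m1}$), so that no pivot from an $\overline{\lambda}_i$ is ever destroyed when clearing another block. Once the indexing is pinned down this is straightforward, and everything else (full rank of $I_n\,|\,S$, unimodularity of the pivot minor, structure of the kernel of a split surjection of free abelian groups) is standard.
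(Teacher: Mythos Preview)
Your proposal is correct and follows essentially the same approach as the paper: the paper's argument is precisely the observation that $\lambda = I_n\,|\,S$ and $\overline{\lambda}_i = I_{n_i}\,|\,S_i$ can be written in refined block form, from which the block-triangular structure of Figure~3 makes the row rank $N+n$ immediate. Your write-up simply fleshes out the details (pivot columns, unimodular minor, splitting of the kernel) that the paper leaves implicit; the only quibble is that with the column ordering of Figure~3 the off-diagonal blocks sit to the \emph{right} of the $\overline{\lambda}_i$ pivots, so ``block upper-triangular'' would be the more natural description, but this does not affect the argument.
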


\
 
\subsection{A new toric space construction} The inclusion
$\text{ker}\hspace{0.02in}\lambda(J,N) \longrightarrow T^{d(J)}$ gives an action of 
$\text{ker}\hspace{0.02in}\lambda(J,N)\;$ on the the $T^{d(J)}$-equivariantly
homotopy equivalent spaces 
$$Z\big(K_{P}; \big(\underline{CZ(K_{P_{i}})}, \;
\underline{Z(K_{P_{i}})}\big)\big)
\; \simeq\; Z\big(K_{P}; \big(\underline{(D^{2})^{j_{i}}}, \; \underline{Z(K_{P_{i}})}\big)\big).$$

\nd In this context of simple polytopes, Theorem \ref{thm:aa} gives
$$Z\big(K_{P}; \big(\underline{(D^{2})^{j_{i}}}, \; \underline{Z(K_{P_{i}})}\big)\big)\\
\; = \;Z\big(K_{P(P_{1}, P_{2}, \ldots, P_{m})};(D^{2} ,S^{1})\big).$$

\

\nd The next theorem follows from the assembly of this information.

\begin{thm}\label{thm:bigaction}
There is a homotopy equivalence of orbit spaces
$$Z\big(K_{P}; \big(\underline{CZ(K_{P_{i}})}, \;
\underline{Z(K_{P_{i}})}\big)\big)\big/{\rm{ker}}\hspace{0.02in}\lambda(J,N)
\;\simeq\; Z\big(K_{P(P_{1}, P_{2}, \ldots, P_{m})};(D^{2} ,S^{1})\big)\big/
{\rm{ker}}\hspace{0.02in}\lambda(J,N)$$
\end{thm}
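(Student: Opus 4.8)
The plan is to obtain the asserted homotopy equivalence of orbit spaces by first establishing a $T^{d(J)}$-equivariant homotopy equivalence of the two polyhedral products appearing on the left and right, and then passing to quotients by the subgroup $\text{ker}\hspace{0.02in}\lambda(J,N) \subset T^{d(J)}$, provided that this subgroup acts freely on both spaces. The equivariant homotopy equivalence is exactly the polytope-level analogue of Proposition \ref{prop:he}: applying \eqref{eqn:pairs} with $K = K_{P_{i}}$ for each $i = 1,2,\ldots,m$, one gets an equivariant homotopy equivalence of CW pairs $\big(CZ(K_{P_{i}}), Z(K_{P_{i}})\big) \longrightarrow \big((D^{2})^{j_{i}}, Z(K_{P_{i}})\big)$ compatible with the $T^{j_{i}}$-actions, and the functoriality of the polyhedral product in the pair arguments (\cite[Lemma 2.2.1]{denham.suciu}) assembles these into the required $T^{d(J)}$-equivariant homotopy equivalence $$Z\big(K_{P}; \big(\underline{CZ(K_{P_{i}})}, \underline{Z(K_{P_{i}})}\big)\big) \simeq Z\big(K_{P}; \big(\underline{(D^{2})^{j_{i}}}, \underline{Z(K_{P_{i}})}\big)\big).$$ So in fact this theorem is proved by the same argument as Proposition \ref{prop:he}, with $\partial\Delta^{j_{i}-1}$ replaced by $K_{P_{i}}$ throughout; nothing in that proof used that the $K_{i}$ were sphere boundaries.

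Next I would address the freeness of the $\text{ker}\hspace{0.02in}\lambda(J,N)$-action. The relevant inclusion is $\text{ker}\hspace{0.02in}\lambda(J,N) \hookrightarrow T^{d(J)}$, and $T^{d(J)}$ acts on the right-hand space $Z\big(K_{P(P_{1},\ldots,P_{m})};(D^{2},S^{1})\big)$ as the coordinatewise torus on the moment-angle complex. By the preceding proposition the subgroup has rank $d(J) - N - n$, matching the expected codimension, and the structure of the matrix $\lambda(J,N)$ in Figure 3 — whose ``block rows'' are built from the characteristic matrices $\overline{\lambda}_{i}$ of the $P_{i}$ and the characteristic matrix $\lambda$ of $P$, all of which satisfy the regularity condition — should guarantee that at each vertex of the polytope $P(P_{1},\ldots,P_{m})$ the corresponding minor condition holds, exactly as in the odd-sphere case. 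Concretely, one checks that for every simplex $\sigma$ of $K_{P(P_{1},\ldots,P_{m})}$ the composite $\text{ker}\hspace{0.02in}\lambda(J,N) \hookrightarrow T^{d(J)} \twoheadrightarrow T^{d(J) - |\sigma|}$ (projection onto the coordinates not in $\sigma$) is injective; this is the standard criterion ensuring the quotient is free, and it follows from the direct-summand/regularity property of $\lambda(J,N)$. Freeness on the left-hand space then follows because the equivariant homotopy equivalence intertwines the two actions, or alternatively directly from the definition of the action on $Z\big(K_{P};(\underline{CZ(K_{P_{i}})}, \underline{Z(K_{P_{i}})})\big)$ using the same minor condition.

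Finally, having an equivariant homotopy equivalence between two spaces on each of which a compact Lie group $G = \text{ker}\hspace{0.02in}\lambda(J,N)$ acts freely, one concludes a homotopy equivalence of orbit spaces: the equivariant map descends to the quotients, and since both quotient maps are principal $G$-bundles, the induced map on base spaces is a weak equivalence (five lemma on the long exact homotopy sequences of the two fibrations, with the total spaces weakly equivalent $G$-equivariantly and the fibers identical), hence a homotopy equivalence between CW complexes. Combining this with the identification $$Z\big(K_{P}; \big(\underline{(D^{2})^{j_{i}}}, \underline{Z(K_{P_{i}})}\big)\big) = Z\big(K_{P(P_{1},P_{2},\ldots,P_{m})};(D^{2},S^{1})\big)$$ supplied by Theorem \ref{thm:aa} gives the claimed statement.

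I expect the main obstacle to be the verification that $\text{ker}\hspace{0.02in}\lambda(J,N)$ acts freely on the moment-angle complex of the composed complex $K_{P(P_{1},\ldots,P_{m})}$ — that is, the careful bookkeeping with the block matrix $\lambda(J,N)$ of Figure 3 to show the requisite minors are unimodular at every vertex. The rest of the argument (equivariant functoriality of polyhedral products, passage to orbit spaces) is entirely parallel to the odd-sphere case treated earlier and should be routine, though one must be slightly careful that the homotopy equivalence of \eqref{eqn:pairs} is genuinely $T^{j_{i}}$-equivariant — the existence of a standard $T^{j_{i}}$-equivariant diffeomorphism $D^{2j_{i}} \cong (D^{2})^{j_{i}}$ is what makes this work, exactly as in the proof of Proposition \ref{prop:he}.
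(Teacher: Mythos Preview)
Your overall skeleton --- apply Proposition~\ref{prop:he} with $K_i = K_{P_i}$ to get a $T^{d(J)}$-equivariant homotopy equivalence, invoke Theorem~\ref{thm:aa} to identify the right-hand side with the moment-angle complex of the composed complex, then pass to orbits by ${\rm ker}\,\lambda(J,N)$ --- is exactly the paper's argument, which is dispatched in one line (``follows from the assembly of this information'').

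The gap is your second paragraph. You assert that freeness of the ${\rm ker}\,\lambda(J,N)$-action ``follows from the direct-summand/regularity property of $\lambda(J,N)$'', but the paper never establishes any regularity property for $\lambda(J,N)$, and the Remark immediately following the theorem says explicitly that characterizing when this action is free is \emph{work in progress} by the authors. Your ``one checks'' minor verification is precisely the step they flag as unresolved (and which can in fact fail when the $P_i$ are not simplices); you cannot simply claim it here.

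Fortunately, freeness is not needed for the theorem as stated. The map of pairs in \eqref{eqn:pairs} is the identity on the $A$-component $Z(K_i)$, and on the $X$-component it is a $T^{j_i}$-equivariant map between two $T^{j_i}$-equivariantly contractible spaces ($CZ(K_i)$ contracts to its cone point, $(D^{2})^{j_i}$ to the origin, both fixed). It is therefore a $T^{j_i}$-homotopy equivalence of pairs in the strong sense (equivariant inverse, equivariant homotopies), and functoriality gives a genuine $T^{d(J)}$-homotopy equivalence of polyhedral products. Such a map descends to a homotopy equivalence of orbit spaces for \emph{any} subgroup, free or not --- no principal-bundle or five-lemma argument is required. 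Replace your freeness discussion with this observation and the proof is complete; you have correctly identified the ``main obstacle'', but the right move is to route around it rather than through it.
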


\nd where $K_{P(P_{1}, P_{2})}$ is the nerve complex of the $d(J)$--faceted, $(n+N)$--dimensional polytope
$P(P_{1}, P_{2}, \ldots, P_{m})$ and $\text{ker}\hspace{0.02in}\lambda(J,N)$ is isomorphic to a torus of 
dimension $d(J) - (n+N)$.

\begin{rem}
 In a natural way, the form of the matrix $\lambda(J,N)$ indicates that
$$Q = \text{ker}\lambda_{1}\oplus \text{ker}\lambda_{2}\oplus \cdots \oplus \text{ker}\lambda_{m}$$

\nd can be considered as a $(d(J)-N)$-dimensional subspace of $\mathbb{Z}^{d(J)}$. Work in progress by the
authors has, among other things, the goal of characterizing the cases when ${\rm{ker}}\hspace{0.02in}\lambda(J,N)$
is an $(d(J)-N-n)$-dimensional subspace of $Q$. In particular, this would imply that the action in
Theorem \ref{thm:bigaction} is free. There would follow a natural generalization of Construction \ref{def:tilde2}
in which the free $S^{1}$ action of $X_{i}$ is replaced with a free $T^{j_{i}-n_{i}}$ action on each $X_{i}$.
In the case above, $X_{i} =  Z(K_{P_{1}})$ with $\text{ker}\hspace{0.02in}\lambda_{i} \cong T^{j_{i}-n_{i}}\;$
acting freely. The group $\text{ker}\hspace{0.02in}\lambda(J,N)$ acts on 
$Z(K_{P_{1}}) \times Z(K_{P_{2}}) \times \cdots \times Z(K_{P_{m}})$ via the inclusion 
$\text{ker}\hspace{0.02in}\lambda(J,N) \longrightarrow T^{d(J)}$. So, with very little change in the
definitions, the left hand side of the equivalence in Theorem \ref{thm:bigaction} would be identified as
$$\big(Z(K_{P_{1}}) \times Z(K_{P_{2}}) \times \cdots \times Z(K_{P_{m}})
\big/\text{ker}\lambda(J,N)\big) \times P^{n} \big/\!\!\sim_{2}.$$
\end{rem}

\skp{0.5}
\bibliographystyle{amsalpha}

\end{document}